\providecommand{\U}[1]{\protect\rule{.1in}{.1in}}
\newtheorem{theorem}{Theorem}[section]
\newtheorem{corollary}[theorem]{Corollary}
\newtheorem{definition}[theorem]{Definition}
\newtheorem{lemma}[theorem]{Lemma}
\newtheorem{proposition}[theorem]{Proposition}
\newtheorem{remark}[theorem]{Remark}
\newenvironment{proof}[1][Proof]{\noindent\textbf{#1.} }{\ \rule{0.5em}{0.5em}}
\newcommand{\C}{\mathbb{C}}
\newcommand{\Z}{\mathbb{Z}}
\newcommand{\N}{\mathbb{N}}
\newtheorem{prop}[theorem]{Proposition}
\title{Fusion Rings and Modular Invariance for Affine Lie Algebras: A Double Affine Weyl Group Formulation}
\author{Alejandro Ginory}
\begin{document}

\date{}
\maketitle

\begin{abstract}
	From a certain induced representation $\mathcal{P}_\ell$ of a double affine Weyl group, we construct a ring $\mathcal{F}_\ell$ that is isomorphic to the fusion ring, or Verlinde algebra, associated to affine Lie algebras at fixed positive integer level for both twisted and untwisted type. The induced representation, which also has a natural commutative associative algebra structure and is modular invariant with respect to certain congruence subgroups, contains $\mathcal{F}_\ell$ as an ideal and we show how it naturally inherits the modular invariance property from $\mathcal{P}_\ell.$ This construction directly shows how the action of the modular transformation $S:\tau\to -1/\tau$ determines the structure constants, with respect to a natural basis, of $\mathcal{F}_\ell,$ which are precisely the fusion rules of Verlinde algebras. Using this ideal, we also give a simple proof of a well-known epimorphism between the representation rings of simple Lie algebras and the fusion rings of the corresponding affine Lie algebras.
\end{abstract}

\section{Introduction}

One of the most remarkable features of the representation theory of affine Lie algebras is that the span of the characters of their integrable highest weight modules at fixed positive integral level form modular invariant (with respect to certain congruence subgroups) Frobenius algebras, known as fusion rings or Verlinde algebras \cite{Ver}. The structure constants with respect to the character basis are known as the fusion rules and play a very significant role in conformal field theory \cite{Fuc}. In parallel, the double affine Weyl groups (and, more generally, double affine Hecke algebras) associated with these affine Lie algebras are known to possess modular invariance with respect to the same congruence subgroups \cite{IS}. This leads to the natural question of whether the modular invariance and fusion rules for affine Lie algebras can be realized by double affine Weyl groups.

\medskip

Similar to the way that Weyl groups of finite dimensional simple Lie algebras can be used to describe much of their representation theory, in this work, we show how the double affine Weyl group can be used to describe both the modular invariance properties of characters and the fusion ring structure. We give a natural construction of modular invariant Frobenius algebras $\mathcal{F}_\ell$ that are isomorphic to the fusion rings at level $\ell,$ where $\ell$ is a positive integer, for both twisted and untwisted affine Lie algebras, as an induced representation of the double affine Weyl group. In our construction, a simple argument shows that the modular transformation $S:\tau\mapsto -1/\tau$ associated to $\mathcal{F}_\ell$ gives rise to the fusion rules, which appear as the natural structure constants of $\mathcal{F}_\ell.$ The formula that links the $S$-matrix to the fusion rules is precisely the Verlinde formula. Moreover, $\mathcal{F}_\ell$ appears as a quotient of the representation ring of the corresponding simple Lie algebra, giving a direct proof of the fact that the Kac-Walton algorithm \cite{Wal} for computing fusion rules yields an epimorphism between the representation ring and $\mathcal{F}_\ell.$ Perhaps it is important to mention that we do not investigate conformal blocks nor prove that their dimensions give the structure constants of the fusion algebras of affine Lie algebras. For this link, please see \cite{Bea}, \cite{Tel}, \cite{Ho1}, and the references therein.

\medskip

Let us describe $\mathcal{F}_\ell$ in a bit more detail. Similar to the polynomial representation of double affine Hecke algebras (see \cite{Che}), we first construct an induced $\widetilde{W}$-representation $\mathcal{P}_\ell$ called the truncated polynomial representation, where $\ell$ is a positive integer (the level) and $\widetilde{W}$ is the double affine Weyl group associated to an affine Lie algebra $\mathfrak{g}.$ The truncated polynomial representation is itself an algebra that is isomorphic to the group algebra of a finite abelian group $G_\ell,$ where $G_\ell$ is a quotient of the weight lattice of the associated finite dimensional simple algebra $\overset{\circ}{\mathfrak{g}}$ associated with $\mathfrak{g}.$ In our construction, the $S$ transformation appears as the Fourier transform on the space of functions on $G_\ell.$ It also serves as an intertwiner for a finite Heisenberg subgroup of $\widetilde{W}.$ We then study two distinguished subspaces of the group algebra of $G_\ell$: the subspace of alternating functions and the subalgebra of invariant functions (with respect to the action of the finite Weyl group). In the former case, we define an $S$ matrix, i.e., the matrix given by the Fourier transform with respect to a natural basis of alternating functions, and study some of its properties, see (\ref{defn-S-matrix-alternating}) and prop. \ref{prop-S-symmetric-unitary}. In particular, we define a projective representation of certain congruence subgroups, see prop. \ref{prop-projective-action} and cor. \ref{cor-projective-action-congruence}. In the latter case, we study the Fourier transform, which is closely related to the $S$ matrix from the alternating subspace, and define a principal ideal $\mathcal{F}_\ell$ by using distinguished linear characters of $G_\ell,$ see definition \ref{defn-fusion-principal-ideal}. In the ideal $\mathcal{F}_\ell,$ the images of the characters of $\overset{\circ}{\mathfrak{g}}$ corresponding to level $\ell$ dominant weights form a basis whose structure constants are precisely the fusion rules of $\mathfrak{g}$ at level $\ell,$ see theorem \ref{theorem-fusion-ideal-structure-constants}. The mysterious role of the Verlinde formula in computing the fusion rules, in our setting, is explained to be simply a consequence of the facts that the $S$ matrix is the matrix of the Fourier transform (with respect to the character basis) and that the Fourier transform is an algebra isomorphism between function spaces (taking convolution products to point-wise products). 

\medskip

As mentioned earlier, the algebra $\mathcal{F}_\ell$ is naturally a quotient of the representation ring of $\overset{\circ}{\mathfrak{g}},$ see cor. \ref{cor-homomorphism-rep-ring}, and the corresponding projection homomorphism encodes the Kac-Walton algorithm for computing fusion rules \cite{Wal}. In \cite{Wal}, under the assumption that the fusion rules are diagonalized by the $S$ matrix, it is essentially proved that this projection is a homomorphism. This map was shown to be a homomorphism for certain cases in \cite{Fal} (see also \cite{Bea}) and later, in the context of conformal blocks for untwisted affine Lie algebras, Lie algebra cohomology arguments were used to prove that this map is a homomorphism (in addition to proving the Verlinde conjecture), see \cite{Tel}, \cite{Ku2}. In \cite{Ho1}, Lie algebra cohomology is used to prove the analogous result for both twisted and untwisted affine Lie algebras, also in the context of conformal blocks. A related paper \cite{Ho2} gives a very simple definition of the fusion ring for both twisted and untwisted affine Lie algebras realized as a space of functions on finitely many points. In this paper, our construction shows the link with double affine Weyl groups, which makes clear the modular invariance properties of the fusion rings, and recovers the fusion ring found in \cite{Ho2} as the image of the Fourier transform of $\mathcal{F}_\ell.$ 

\medskip

In our approach, the difference between the twisted and untwisted case is less important since our crucial tool, the Fourier transform, is a map between different function spaces. In the untwisted case, it is tempting to identify both spaces via a natural (yet non-canonical) isomorphism, denoted $\iota_1,$ (in fact, we do so at certain points), but a \emph{different} isomorphism, denoted $\iota_3,$ is used at a key point in constructing the ideal $\mathcal{F}_\ell.$ The function spaces in the twisted case do not have such a natural identification.

\medskip

Let us briefly describe the contents of this paper. In section 2, we go over preliminaries related to groups, affine Lie algebras, congruence subgroups of $SL(2,\Z),$ and characters of affine Lie algebras. In section 3, we introduce the double affine Weyl group and certain related groups that feature prominently in the construction. In section 4, we construct the truncated polynomial representation $\mathcal{P}_\ell.$ Subsection 4.1 explores the modular invariance properties of the subspace of alternating functions. In  subsection 4.2, we construct the fusion principal ideal $\mathcal{F}_\ell$ and prove that it is a quotient of the representation ring of the associated simple Lie algebra. The various maps $\iota_1,$ $\iota_2,$ and $\iota_3$ are defined in section 4 and give identifications of lattice quotients with linear characters. We provide an $A_1^{(1)}$ example at the end of the text.

\medskip

\noindent \textbf{Acknowledgments.} I would like to thank S. Sahi for suggesting this problem and for many helpful conversations. I am also very grateful to S. Kumar and J. Hong for their interest and insightful comments. 

\medskip

\section{Preliminaries}

In this section, we briefly recall basic notions and set notation to be used throughout this paper. The finite group theory content is standard and should be very familiar. For example, content related to harmonic analysis on finite groups can be found in \cite{Ter}. The background related to affine Lie algebras can be found in \cite{Ka}. We follow the notation in that reference very closely. 

\subsection{Group Theory Notions}
\label{sec-group-theory-prelims}
Most of the groups that we will consider here are discrete or finite groups. Let $G$ be a group, then its (complex) \emph{group algebra} is the complex vector space $\C[G]$ with basis given by $\{\sigma\}_{\sigma\in G}$ whose multiplication is defined on the basis by the group multiplication (and extended linearly to the rest of $\C[G]$). If $G$ is abelian with operation $+,$ then the basis of $\C[G]$ is written $\{e^g\}_{g\in G},$ where $e^{g}$ is regarded as a formal exponential and we have $e^{g}\cdot e^{h}=e^{g+h}$ (as the notation suggests). 

\medskip

Let $C(G,\C)$ be the vector space of $\C$-valued functions on $G.$ In the case of finite groups, this space comes with two different associative multiplications: the \emph{convolution product} defined by
\begin{equation}
	f*g(\sigma)=\sum_{\tau\in G}f(\sigma\tau^{-1})g(\tau)\sigma,
\end{equation}
for all $f,g\in C(G,\C),$ and $\sigma\in G,$ and the \emph{point-wise multiplication product} defined by
\begin{equation}
f\cdot g(\sigma)=f(\sigma)\cdot g(\sigma),
\end{equation}
for all $f,g\in C(G,\C),$ and $\sigma\in G.$ Under the convolution product, the linear map defined by $\delta_{\sigma}\mapsto \sigma,$ where $\delta_{\sigma}(\tau)=1$ if $\sigma=\tau$ and $0$ otherwise, is an algebra isomorphism. We will frequently identify $C(G,\C)$ with $\C[G]$ using this isomorphism. Under point-wise multiplication, $C(G,\C)$ is a commutative algebra. 

\medskip

The \emph{Pontryagin dual} of a group $G$ is the abelian group
\begin{equation}
\widehat{G}=\text{Hom}(G,S^1),
\end{equation}
where $S^1$ is the unit circle, under point-wise multiplication. When $G$ is finite and abelian, $\widehat{G}\cong G,$ though the isomorphism is non-canonical. For any group $G,$ define the \emph{Fourier transform} of a function $f:G\to \C$ to be the function $\widehat{f}:\widehat{G}\to \C$ defined by
\begin{align}
\widehat{f}(\chi)=|G|^{-1/2}\sum_{\sigma\in G}f(\sigma)\chi(\sigma)^*,
\end{align}
where $\chi\in \widehat{G}$ and ${}^*$ means complex conjugate. A very important property of the Fourier transform is that for any functions $f,g$ on $G$ and $\chi\in \widehat{G},$
\begin{equation}
\widehat{f*g}(\chi)=\widehat{f}(\chi)\widehat{g}(\chi),
\end{equation}
i.e., the Fourier transform takes convolution products to point-wise products. In fact, when $G$ is finite and abelian, the Fourier transform is an algebra isomorphism $\C[G]\cong C(\widehat{G},\C).$ For finite abelian groups, the \emph{inverse Fourier transform} is defined 
\begin{equation}
	F^\vee(\sigma)=|G|^{-1/2}\sum_{\chi\in \widehat{G}}  \chi(\sigma)F^\vee(\chi),
\end{equation}
for any $F\in C(\widehat{G},\C),$ $\sigma\in G.$

\medskip

\subsection{Affine Lie Algebras}
\label{sec-ALA-prelims}

An affine Kac-Moody algebra (we will also use the term affine Lie algebra) $\mathfrak{g}$ of type $X_N^{(r)}$ is associated to a finite dimensional simple Lie (sub-)algebra $\overset{\circ}{\mathfrak{g}}\subset \mathfrak{g}.$ Let $n$ be the rank of $\overset{\circ}{\mathfrak{g}}.$ Let $a_0,\ldots,a_n$ be the labels of the Dynkin diagram of $\mathfrak{g}$ and $a_0^\vee,\ldots,a_n^\vee$ be the dual labels. The weight lattice $P,$ root lattice $Q,$ and coroot lattice $Q^\vee$ of $\mathfrak{g}$ have their finite counterparts $\overset{\circ}{P},$ $\overset{\circ}{Q},$ and $\overset{\circ}{Q}{}^\vee.$ The Coxeter and dual Coxeter numbers of $\mathfrak{g}$ are denoted $h$ and $h^\vee,$ respectively. We use the usual notation for simple roots and coroots $\alpha_i,\alpha_j^\vee,$ for $j=0,\ldots,n,$ respectively. Let $\mathfrak{h}$ be the Cartan subalgebra of $\mathfrak{g}$ with basis $\alpha_1^\vee,\ldots,\alpha_n^\vee,K,$ and $d$ then denote the projection map onto $\overset{\circ}{\mathfrak{h}},$ the Cartan subalgebra of $\overset{\circ}{\mathfrak{g}},$ by $v\mapsto \overline{v}.$ Here $K$ is the so-called canonical central element. 

\medskip

Let $\Lambda_0,\ldots,\Lambda_n$ be the fundamental weights and $\Lambda_0^\vee,\ldots,\Lambda_n^\vee\in \mathfrak{h}$ be the cofundamental weights, which satisfy
\begin{align}
\label{defn-fund-cofund-weights}
	\langle \Lambda_i,\alpha_j^\vee\rangle =\delta_{ij} \text{ and } \langle \Lambda_i^\vee,\alpha_j\rangle =\delta_{ij} \text{ for }i,j=0,\ldots,n.
\end{align}
Let $P^\vee$ be the lattice spanned by $\{\Lambda_i^\vee\}_{i=0}^{n}$ and $\overset{\circ}{P}{}^\vee$ be the sublattice spanned by $\{\Lambda_i^\vee\}_{i=1}^{n}.$ The sets $P^\vee_\ell,$ $P^{\vee,+},$ etc. have the obvious meanings. One can show that $\langle \Lambda_i^\vee,\alpha_j^\vee\rangle = \frac{a_j}{a_j^\vee}$ and $\langle \Lambda_i,\alpha_j\rangle = \frac{a_j^\vee}{a_j}.$ In particular, for type $X_n^{(r)},$ $r\Lambda_i^\vee\in P$ and $r\alpha_i^\vee\in Q.$ Also define
\begin{align}
	\rho=\Lambda_0+\Lambda_1+\cdots +\Lambda_n \text{ and } \rho^\vee=\Lambda_0^\vee+\Lambda_1^\vee+\cdots +\Lambda_n^\vee 
\end{align}
Note that $\{\lambda+\rho:\lambda\in P^+_\ell\}=P_\ell^{++}$ while an analogous statement holds for $P^\vee_\ell.$ Note that the level of $\rho$ is $h^\vee$ and the level of $\rho^\vee=h.$ 

\medskip

Let $\langle,\rangle$ be the normalized invariant form on $\mathfrak{h}$ and $\nu:\mathfrak{h}\to \mathfrak{h}^*$ be the vector space isomorphism induced by this form. Note that we use the same notation for the canonical pairing between $\mathfrak{h}$ and $\mathfrak{h}^*.$ For any element $\lambda\in \mathfrak{h}^*,$ the \emph{level} $\ell(\lambda)$ is defined to be $\langle\lambda,K\rangle.$ Denote the set of dominant weights by $P^+,$ the set of regular dominant weights by $P^{++},$ the set of the level $\ell$ dominant weights by $P_\ell^+,$ and the set of the level $\ell$ regular dominant weights by $P_{\ell}^{++}.$ For later use, note that
\begin{align}
	Q^\vee &= \overset{\circ}{Q}{}^\vee\oplus \Z K \text{ and}\\
	\nu(Q^\vee) &= \nu(\overset{\circ}{Q}{}^\vee)\oplus \Z \delta,
\end{align}
where $\nu(K)=\delta.$

\medskip

The \emph{Weyl group }$W$ of $\mathfrak{g}$ is the subgroup of the orthogonal group $O(\mathfrak{h}^*,\langle,\rangle)$ generated by the \emph{simple reflections} $s_j,$ for $j=0,\ldots,n,$ on $\mathfrak{h}^*$ defined by 
\begin{align}
	s_j(v)=v-\langle \alpha_j^\vee,v\rangle\alpha_j.
\end{align}
The Weyl group acts on $P_\ell$ and the $P^+_\ell$ is a fundamental domain. The \emph{finite Weyl group} $\overset{\circ}{W}$ is the subgroup of $W$ generated by $s_j,$ for $j=1,\ldots,n.$ For any $w\in W$ or $\overset{\circ}{W},$ its \emph{length} $\ell(w)=\min\{k:w=s_{j_1}\cdots s_{j_k}\}$ where $j_i\in \{0,\ldots,n\},$ $j_i\in \{1,\ldots,n\}$ for $W$ and $\overset{\circ}{W},$ respectively. These groups act on $\mathfrak{h}$ via $\nu.$ We call these actions on $\mathfrak{h}^*$ and $\mathfrak{h}$ the \emph{standard actions}. An explicit decomposition of $W$ can be given as follows. Let $\theta\in \overset{\circ}{Q}$ be the highest root of $\overset{\circ}{\mathfrak{g}},$ then $\delta\in Q$ is the isotropic vector $\delta=a_0\alpha_0+\theta,$ where $a_0$ is the numerical label on the affine node in the Dynkin diagram of $\mathfrak{g}$ (see \cite{Ka} \textsection 4.8). Define the lattice
\begin{align}
	M=\Z\overset{\circ}{W}(a_0^{-1}\theta).
\end{align}
In particular,
\begin{align}
\label{M-cases}
	M=\left\{\begin{matrix}
	\nu(\overset{\circ}{Q}{}^\vee) &\text{ if }r\leq a_0\\
	\overset{\circ}{Q} &\text{ if }r> a_0\\
	\end{matrix}\right. 
\end{align}
and $\overset{\circ}{W}$ acts on $M.$ An important property is that $M\subseteq \nu(\overset{\circ}{Q}{}^\vee).$ Then 
\begin{equation}
	W\cong \overset{\circ}{W}\ltimes M,
\end{equation}
where the element of $W$ corresponding to $\alpha\in M$ is denoted $t_\alpha$ and acts on $\mathfrak{h}^*$ by
\begin{align}
	t_{\alpha}(\lambda)=\lambda +\langle\lambda,K\rangle \alpha - (\langle \alpha,\lambda\rangle + \frac{|\alpha|^2}{2}\langle \lambda,K\rangle)\delta.
\end{align}
Define the \emph{extended Weyl group} to be
\begin{align}
\label{def-ext-aff-Weyl-group}
	W^e=\overset{\circ}{W}\ltimes \overset{\circ}{P}.
\end{align}
For later use, let 
\begin{equation}
\label{M-dual-lattice}
	M^\circ=\{v\in\mathfrak{h}:\langle v,\lambda\rangle\in \Z,\text{ for all }\lambda\in M\},
\end{equation}
and identify it with $\nu(M^\circ)\subseteq \mathfrak{h}^*.$ 

Let $k\in \Z,$ then another important group, acting on $\overset{\circ}{\mathfrak{h}}$ and its dual, is the \emph{affine Weyl group} $W^{aff}_{k}.$ It has a similar decomposition as $W$ in terms of a finite Weyl group and a lattice,
\begin{equation}
	W^{aff}_{k}\cong \overset{\circ}{W}\ltimes kM,
\end{equation}
but its action on $\overset{\circ}{\mathfrak{h}}$ is 
\begin{align}
	wt_{\alpha}(\lambda)=w(\lambda +\alpha)
\end{align}
where $w\in \overset{\circ}{W}$ and $\alpha\in kM.$ The \emph{fundamental alcove} $A_k$ is defined
\begin{align}
\label{fund-alcove}	
	A_k=\{v\in \overset{\circ}{\mathfrak{h}}: \langle v,a_0^{-1}\theta\rangle< k \text{ and } 0<\langle v,\alpha_j\rangle\text{ for }j=1,\ldots,n \}
\end{align}
and its closure is a fundamental domain for $W^{aff}_{k}$ \cite{Hum}. Note that $cl(A_k)\cap \overset{\circ}{P}=\overline{P}_{k}^+,$ where $cl$ denotes closure. 

\medskip

Fix an element $\rho\in\mathfrak{h}^*$ such that $\langle \rho,\alpha_j^\vee\rangle =1$ for $j=0,\ldots,n.$ Define the \emph{dot action} of $W$ to be, for $\lambda,\mu\in \mathfrak{h}^*,$
\begin{align}
	w\cdot \lambda = w(\lambda+\rho)-\rho
\end{align}
and the dot action of $\overset{\circ}{W}$ to be
\begin{align}
	w\cdot \overline{\mu} = w(\overline{\mu}+\overline{\rho})-\overline{\rho}.
\end{align}

\medskip

\subsection{Congruence Subgroups}

We will briefly recall basic definitions involving congruence subgroups. $\text{SL}(2,\Z)$ is the group of matrices with integer coefficients whose determinant is equal to 1. The following subgroups of $\text{SL}(2,\Z)$ are called \emph{principal congruence subgroups}
$$\Gamma_1(r)=\left\{ 
\begin{pmatrix} 
a & b\\ 
c & d 
\end{pmatrix} 
\right|
\left.
\begin{pmatrix} a & b\\ 
c & d
\end{pmatrix}\equiv \begin{pmatrix}
1 & *\\ 
0 & 1
\end{pmatrix} \mod{r} \right\}.$$

\medskip

Define the matrices
$$u_{12}=\begin{pmatrix} 
1 & -1\\ 
0 & 1
\end{pmatrix} \text{ and } u_{21}=\begin{pmatrix} 
1 & 0\\ 
1 & 1 
\end{pmatrix}.$$
For $r=1,2,3,$ $u_{12},u_{21}^r$ generate the groups $\Gamma_1(r)$ and satisfy the $r$-braid relations
\begin{equation}
	\underbrace{u_{12}u_{21}^ru_{12}\cdots }_{b(r)\text{ many}}=\underbrace{u_{21}^r u_{12} u_{21}^r \cdots}_{b(r) \text{ many}}
\end{equation}
where $b(1)=3,b(2)=4,$ and $b(3)=6.$ Note that $\Gamma_1(1)=SL_2(\Z)$ and that it is generated by the matrices
\begin{equation}
	T=u_{12}^{-1}=\begin{pmatrix} 
	1 & 1\\ 
	0 & 1
	\end{pmatrix} \text{ and }
	S=u_{12}u_{21}u_{12}=\begin{pmatrix} 
	0 & -1\\ 
	1 & 0
	\end{pmatrix}.
\end{equation}

\medskip

\subsection{Characters of Affine Lie Algebras}
\label{sec-characters-affine-lie}
Let $\mathfrak{g}$ be a Lie algebra and $V$ be a $\mathfrak{g}$-module, then the character of $V$ is the formal sum \begin{equation}
	\chi_V=\sum_{\lambda}{m_{V}(\lambda)e^\lambda}
\end{equation}
where $V=\bigoplus_{\lambda\in\mathfrak{h}^*}V_{\lambda},$ $V_{\lambda}$ is the weight space of weight $\lambda,$ and $m_{V}(\lambda)=\dim V_\lambda.$ Here we assume that $\dim V_{\lambda}\in \N$ for all $\lambda\in \mathfrak{h}^*.$ The \emph{level} of a highest weight module for an affine Lie algebra is the level of its highest weight.

\medskip

For any symmetrizable Kac-Moody algebra $\mathfrak{g},$ such as the affine Lie algebras and finite dimensional simple Lie algebras, the character of a highest weight integrable module with highest weight $\lambda$ is given by the well-known Kac-Weyl character formula:

\begin{align}
\label{character-formula}
	\chi_{\lambda} = \frac{\sum_{w\in W} (-1)^{\ell(w)}e^{w(\lambda+\rho)}}{\sum_{w\in W} (-1)^{\ell(w)}e^{w(\rho)}}
\end{align}

\noindent where $W$ is the Weyl group of $\mathfrak{g}$, $\ell$ is the length function on $W$, and $\rho\in \mathfrak{h}^*$ is an element such that $\langle \rho,\alpha_i^\vee\rangle =1$ for all $i$. We often write $\varepsilon(w)=(-1)^{\ell(w)}$ for any Weyl group element $w.$ The map $\varepsilon$ is a homomorphism and is called the \emph{sign character}. When $\mathfrak{g}$ is affine and $\chi_{\lambda}$ is the character of an irreducible highest weight $\mathfrak{g}$-module of highest weight $\lambda,$ then $\overset{\circ}{\chi}_{\overline{\lambda}}$ is the character of an irreducible $\overset{\circ}{\mathfrak{g}}$-module of highest weight $\overline{\lambda}.$

\medskip

The representation ring of $\overset{\circ}{\mathfrak{g}}$ is the subalgebra $Ch(\overset{\circ}{\mathfrak{g}})$ of $\C[\overset{\circ}{P}]$ generated by $\overset{\circ}{\chi}_{\overline{\lambda}}$ for $\lambda\in P^+.$ Multiplication of two characters in this ring produces the character of the corresponding tensor product of modules. Later on, we will see the connection between representation rings of finite dimensional simple algebras and the fusion rings constructed here.

\medskip

\subsection{Verlinde Formula}

The Verlinde formula, in one of its several manifestations, relates the action of $SL(2,\Z)$ on the space of level $\ell$ characters of integrable modules for untwisted affine Lie algebras to the fusion rules. The famous formula is
\begin{equation}
\label{verlinde-formula}
	N_{\lambda\mu}^{\nu}=\sum_{\phi\in I}\frac{S_{\lambda\phi}S_{\mu\phi}(S^{-1})_{\phi\nu}}{S_{0\phi}}
\end{equation}
where $S=(S_{\lambda\mu})_{\lambda,\mu\in P_\ell^+}$ (see \cite{Ver}). Notice that replacing $S$ with $SD$ where $D$ is an invertible diagonal matrix will yield the same fusion rules. It is well-known that $S_{\lambda\mu}$ is proportional to 
\begin{equation}
	\sum_{w\in \overset{\circ}{W}}\varepsilon(w)e^{-\frac{2\pi i}{\ell+h^\vee}\langle w(\overline{\lambda}+\overline{\rho}),\overline{\mu}+\overline{\rho}\rangle}
\end{equation}
by a scalar independent of $\lambda$ and $\mu$ \cite{Ka}, \cite{Ver}, \cite{Wal}. 

\medskip

\section{Double Affine Weyl Groups}

Let $\mathfrak{g}$ be an affine Kac-Moody algebra of any type. Recall that the Weyl group $W$ of $\mathfrak{g}$ has the form $W\cong \overset{\circ}{W}\ltimes M$ and that $W$ acts on $Q^\vee\subseteq \mathfrak{h}.$ For any $\alpha\in \mathfrak{h}^*,$ denote by $p_\alpha$ the translation by $\alpha$ operator, i.e., for all $v\in \mathfrak{h}^*,$ $p_\alpha(v)=v+\alpha.$ 

\medskip

\begin{definition}
\label{def-DAWG-EDAWG}
	Let $\mathfrak{g}$ be an affine Lie algebra of any type, then its \emph{double affine Weyl group} (\emph{DAWG}) is the group
	\begin{equation}
	\label{DAWG-decomp1}
		\widetilde{W}=W\ltimes Q^{\vee}.
	\end{equation}
	The \emph{extended double affine Weyl group} (\emph{EDAWG}) is the group
	\begin{equation}
	\label{EDAWG-decomp1}
		\widetilde{W}^{e}=W^{e}\ltimes \left(M^\circ\oplus \frac{1}{m}\Z\delta\right),
	\end{equation}
	(see (\ref{def-ext-aff-Weyl-group}) and (\ref{M-dual-lattice})) where $m$ is the least positive integer such that $m\langle M^\circ,\overset{\circ}{P}\rangle\subseteq \Z.$
\end{definition}

\medskip

As the names suggest, $\widetilde{W}$ is a subgroup of $\widetilde{W}^{e}.$ An element $\beta$ in the second lattice of $\widetilde{W}$ or $\widetilde{W}^{e}$ ($Q^{\vee}$ or $M^\circ\oplus\frac{1}{m}\Z\delta$, respectively) acts by $p_{\beta},$ as defined at the beginning of this section. In particular,
\begin{align}
	t_\alpha p_\beta t_{-\alpha}p_{-\beta}=p_{t_\alpha(\beta)}p_{-\beta}=p_{-\langle\alpha,\beta\rangle\delta}
\end{align}
Note that the DAWG and EDAWG have the form
\begin{align}
	\widetilde{W}\cong \overset{\circ}{W}\ltimes \mathcal{H},\\
	\widetilde{W}^{e}\cong \overset{\circ}{W}\ltimes \mathcal{H}^{e},
\end{align}
where $\mathcal{H}= M\ltimes (\overset{\circ}{Q}{}^\vee \oplus \Z\delta)\cong \overset{\circ}{Q}{}^\vee \ltimes (M\oplus \Z\delta)$ and $\mathcal{H}^{e}= M^\circ\ltimes (\overset{\circ}{P}\oplus \frac{1}{m}\Z\delta)\cong \overset{\circ}{P}\ltimes (M^\circ\oplus \frac{1}{m}\Z\delta)$ are (discrete) Heisenberg groups. We call $\mathcal{H}$ and $\mathcal{H}^{e}$ the \emph{associated Heisenberg group} and \emph{associated extended Heisenberg group}, respectively. The elements of these Heisenberg groups will be written 
$(\alpha,\beta,u)$ where $\alpha\in \overset{\circ}{P},$ $\beta\in M^\circ,$ and $u\in \Z,$ where we identify $(\alpha,0,0)$ with $t_\alpha,$ $(0,\beta,0)$ with $p_\beta,$ and $(0,0,u)$ with $p_{u\delta}=u\delta\in \frac{1}{m}\Z\delta.$ Their multiplication is given by
\begin{equation}
	(\alpha,\beta,u)\cdot (\alpha',\beta',u')=(\alpha+\alpha',\beta+\beta',u+u'+\frac{1}{2}\langle \alpha',\beta\rangle-\frac{1}{2}\langle\alpha,\beta'\rangle)
\end{equation}
and the elements of $\mathcal{H}^e$ act on $\mathfrak{h}^*$ by
\begin{align}
	(\alpha,\beta,u)(\lambda)=t_\alpha p_\beta p_{(u+\frac{1}{2}\langle \alpha,\beta\rangle)\delta}(\lambda)=t_\alpha(\lambda)+\beta+(u-\frac{1}{2}\langle \alpha,\beta\rangle)\delta.
\end{align}
Finally, $\widetilde{W}^{e}$ and $\widetilde{W}$ admit an action by the principal congruence subgroup $\Gamma_1(r)$ defined by
\begin{align}
\label{SL2-Z-action-H}
	(\alpha,\beta,u)\cdot \begin{pmatrix}
	a & b\\
	c & d
	\end{pmatrix}=(a\alpha+c\beta,b\alpha+d\beta,u),
\end{align}
for any $(\alpha,\beta,u)\in \widetilde{W}^{e}_\ell.$ To see this, note that $a\alpha+r\beta\in \overset{\circ}{P}$ for all $\alpha\in \overset{\circ}{P},$ $\beta\in M^\circ$ (see discussion after (\ref{defn-fund-cofund-weights}) and also see (\ref{M-cases}) which relates to the $u_{12}$ action). In the case that $\mathfrak{g}$ is of type $A_{2n}^{(2)},$ $\widetilde{W}^{e}$ and $\widetilde{W}$ admit an action of $\Gamma_1(1)=SL(2,\Z)$ as above, since $M=\nu(\overset{\circ}{Q}{}^\vee).$ 

\medskip

We will also need the following notions. Let $\mathfrak{g}$ be an affine Lie algebra with dual Coxeter number $h^\vee$ and $\ell$ be a positive integer.

\medskip

\begin{definition}
	The finite (extended) Heisenberg group $\mathcal{H}^{e}_\ell$ is
	\begin{align}
		\mathcal{H}^{e}_\ell=\mathcal{H}^{e}/ ((\ell+h^\vee)M\ltimes (\ell+h^\vee)Q^{\vee}).
	\end{align}
	More explicitly,
	\begin{align}
		\mathcal{H}^{e}_\ell=(M^\circ/(\ell+h^\vee)M)\ltimes ((\overset{\circ}{P}\oplus\frac{1}{m}\Z\delta)/(\ell+h^\vee)Q^{\vee}).
	\end{align} 
	We will also denote the elements of $\mathcal{H}^{e}_\ell$ by $(\alpha,\beta,u).$
\end{definition}

\medskip

The group $\overset{\circ}{W}$ acts on $\mathcal{H}^{e}_\ell$ since it acts on $M^\circ,$ $M,$ and $Q^\vee.$ Hence, we can take their semidirect product to form a finite version of the EDAWG. Also note that $\Gamma_1(r)$ acts on $\mathcal{H}^{e}_\ell.$

\medskip

\begin{definition}
	\label{defn-finite-EDAWG}
	The \emph{finite extended double affine Weyl group} is the group
	\begin{equation}
		\widetilde{W}^{e}_\ell=\overset{\circ}{W}\ltimes \mathcal{H}^{e}_\ell.
	\end{equation}
\end{definition}

\medskip

\section{The Induced Representation $\mathcal{P}_\ell$}

Let $\mathfrak{g}$ be an affine Lie algebra of type $X_{N}^{(r)}$ and $\ell$ be a positive integer. Note that in the case $r\leq a_0,$ we have that $M=\nu(\overset{\circ}{Q}{}^\vee)$ and $M^\circ=\overset{\circ}{P}.$ Using the decomposition (\ref{defn-finite-EDAWG}), we consider the subgroup of $\widetilde{W}^{e}_\ell$
\begin{align}
	W_\ell^-&=\{(w,0,\beta,u):\ w\in\overset{\circ}{W}, \beta\in M^\circ/(\ell+h^\vee)\overset{\circ}{Q}{}^\vee,u\in \frac{1}{m}\Z\delta/(\ell+h^\vee)\Z\delta\},
\end{align}
or, equivalently, $W_\ell^-$ is the image of $\overset{\circ}{W}\ltimes 0\ltimes (M^\circ\oplus \frac{1}{m}\Z\delta)$ under the projection map $\widetilde{W}^{e}\to \widetilde{W}^{e}_\ell,$ and also consider the one dimensional $W_\ell^-$-representation $\C_\ell$ where $\overset{\circ}{W}$ and $\{p_\beta\}_{\beta\in M^\circ/(\ell+h^\vee)\overset{\circ}{Q}{}^\vee}$ act trivially and $u\delta\in\frac{1}{m}\Z\delta $ acts by $e^{\frac{2\pi i}{\ell+h^\vee} u}.$

\begin{definition}
\label{defn-trunc-poly-rep}	
	Let $\widetilde{W}^{e}_\ell$ be the finite extended double affine Weyl group and $\C_\ell$ be the $W_\ell^-$-representation defined above, then the \emph{truncated polynomial representation} $\mathcal{P}_\ell$ is the induced representation
	\begin{align}
		\mathcal{P}_\ell=\text{\emph{Ind}}_{W_\ell^-}^{\widetilde{W}^{e}_\ell} \C_\ell.
	\end{align}
\end{definition}

\medskip

Define the finite abelian group
\begin{equation}
	G_\ell = \overset{\circ}{P}/(\ell+h^\vee)M,
\end{equation}
then $\mathcal{P}_\ell\cong \C[G_\ell]$ as vector spaces. From here on, we identify $\mathcal{P}_\ell$ with $\C[G_\ell]$ (and also $C(G_\ell,\C),$ see \textsection\ref{sec-group-theory-prelims}) and give the latter space(s) a $\widetilde{W}^{e}_\ell$-module structure inherited from $\mathcal{P}_\ell.$ When discussing $G_\ell,$ we will often identify an element $\mu\in \overset{\circ}{P}$ with its canonical projection in $G_\ell.$  The action of $\widetilde{W}^{e}_\ell$ on $\C[G_\ell]$ is given by
\begin{align}
	w\cdot e^{\lambda}&=e^{w(\lambda)}\\
	t_\alpha \cdot e^\lambda &= e^{\lambda+\alpha}\\
	p_\beta \cdot e^{\lambda}&= e^{\frac{2\pi i}{\ell+h^\vee} \langle \beta,\lambda\rangle }e^{\lambda},\\
	(u\delta) \cdot e^{\lambda}&= e^{\frac{2\pi i}{\ell+h^\vee} u}e^{\lambda},
\end{align}
for any $w\in \overset{\circ}{W},$ $\alpha,\lambda\in \overset{\circ}{P},$ $\beta\in M^\circ,$ and $u\in \frac{1}{m}\Z.$ The module structure, namely the action of the $t_\alpha,$ coincides with the group algebra multiplication of $\C[G_\ell],$ so we will consider this space as both an algebra and a module. Under the identification discussed in \textsection\ref{sec-group-theory-prelims}, $\widetilde{W}^{e}_\ell$ acts on $C(G_\ell,\C)$ by
\begin{equation}
	(p_\beta t_\alpha w f)(\gamma)=e^{\frac{2\pi i}{\ell+h^\vee}\langle \beta,\gamma\rangle} f(w^{-1}(\gamma-\alpha)),
\end{equation}
for $\alpha,\gamma\in \overset{\circ}{P},$ $\beta\in M^\circ,$ and $w\in \overset{\circ}{W}.$

\medskip

Let us study the Fourier transform on $\C[G_\ell].$ First, we describe the dual in more detail. 

\medskip

\begin{lemma}
	\label{lem-equal-size-quotients}
	For $\mathfrak{g}$ an affine Lie algebra of type $X_{N}^{(r)}$ and $\ell$ be a positive integer, 
	\begin{align}
	\widehat{G}_\ell=\widehat{\overset{\circ}{P}/(\ell+h^\vee)M}\cong M^\circ/(\ell+h^\vee)\overset{\circ}{Q}{}^\vee.
	\end{align}
\end{lemma}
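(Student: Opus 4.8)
The plan is to identify the Pontryagin dual of the finite abelian group $G_\ell = \overset{\circ}{P}/(\ell+h^\vee)M$ by producing an explicit nondegenerate bilinear pairing between $G_\ell$ and the candidate group $M^\circ/(\ell+h^\vee)\overset{\circ}{Q}{}^\vee$, using the normalized invariant form $\langle,\rangle$. Concretely, for $\mu\in\overset{\circ}{P}$ and $\beta\in M^\circ$, consider $(\mu,\beta)\mapsto e^{\frac{2\pi i}{\ell+h^\vee}\langle\beta,\mu\rangle}$; this is exactly the formula by which $p_\beta$ acts on $e^\mu$ in $\mathcal{P}_\ell$, so the computation is natural in this setting. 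First I would check that this descends to a well-defined pairing on the quotients: I need $\langle\beta,\mu\rangle\in(\ell+h^\vee)\Z$ whenever $\mu\in(\ell+h^\vee)M$ (clear, since $\langle M^\circ,M\rangle\subseteq\Z$ by definition of $M^\circ$) and whenever $\beta\in(\ell+h^\vee)\overset{\circ}{Q}{}^\vee$ (this uses $\langle\nu(\overset{\circ}{Q}{}^\vee),\overset{\circ}{P}\rangle\subseteq\Z$, a standard fact about the weight and coroot lattices, together with the inclusion $M\subseteq\nu(\overset{\circ}{Q}{}^\vee)$ noted in the excerpt, which is needed to make sense of the second quotient sitting inside $M^\circ/(\ell+h^\vee)M$-type statements).

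Next I would establish nondegeneracy in both arguments, which gives an injection $M^\circ/(\ell+h^\vee)\overset{\circ}{Q}{}^\vee\hookrightarrow\widehat{G}_\ell$ and an injection in the other direction, hence the isomorphism. Nondegeneracy amounts to: if $\langle\beta,\mu\rangle\in(\ell+h^\vee)\Z$ for all $\mu\in\overset{\circ}{P}$, then $\beta\in(\ell+h^\vee)\overset{\circ}{Q}{}^\vee$ — i.e. $(\ell+h^\vee)^{-1}\beta$ pairs integrally with all of $\overset{\circ}{P}$, which forces it into the lattice dual to $\overset{\circ}{P}$ under $\langle,\rangle$, namely $\nu(\overset{\circ}{Q}{}^\vee)$; one then has to know $\beta\in M^\circ$ and $M^\circ\cap$ (that dual lattice scaled) $=\overset{\circ}{Q}{}^\vee$, which in the $r\le a_0$ case ($M^\circ=\overset{\circ}{P}$, $M=\nu(\overset{\circ}{Q}{}^\vee)$) and the $r>a_0$ case ($M=\overset{\circ}{Q}$) can be read off from (\ref{M-cases}) and the relation $M^\circ=M^*$. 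The symmetric statement in the other variable is easier since $\overset{\circ}{P}$ is already the ambient lattice.

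Alternatively — and this is probably the cleanest writeup — I would just compare orders: since $\widehat{G}_\ell\cong G_\ell$ non-canonically, $|\widehat{G}_\ell| = |\overset{\circ}{P}/(\ell+h^\vee)M|$, and $|M^\circ/(\ell+h^\vee)\overset{\circ}{Q}{}^\vee| = |M^\circ/(\ell+h^\vee)M|\cdot|(\ell+h^\vee)M/(\ell+h^\vee)\overset{\circ}{Q}{}^\vee|$. Using $[M^\circ:M] = [\overset{\circ}{P}:\overset{\circ}{Q}{}^\vee]^{\pm}$-type index identities (all these are full-rank sublattices of $\overset{\circ{\mathfrak{h}}}{}^{(*)}$, so indices multiply and the rank-$n$ scaling factor $(\ell+h^\vee)^n$ cancels), one checks the two orders agree; combined with the injectivity from the pairing (only one direction of nondegeneracy needed), this finishes it. The main obstacle I anticipate is bookkeeping the lattice identifications carefully in the twisted case $r>a_0$, where $M=\overset{\circ}{Q}\ne\nu(\overset{\circ}{Q}{}^\vee)$ and $M^\circ$ is correspondingly $\overset{\circ}{P}{}^\vee$ rather than $\overset{\circ}{P}$ — there the numbers $a_j/a_j^\vee$ from the excerpt enter and one must make sure the pairing formula and the claimed dual group are stated with the right lattice on each side; everything else is a routine application of the nondegenerate-pairing characterization of Pontryagin duals of finite abelian groups.
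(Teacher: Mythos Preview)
Your proposal is correct and follows essentially the same approach as the paper: both use the explicit pairing $\alpha\mapsto e^{\frac{2\pi i}{\ell+h^\vee}\langle\alpha,\cdot\rangle}$ to exhibit the isomorphism. The paper's version is much terser---it simply splits into the cases $r\le a_0$ and $r>a_0$, identifies $M$ and $M^\circ$ explicitly in each (exactly as you anticipate in your final paragraph), writes down the map, and asserts the result---whereas you spell out the well-definedness and nondegeneracy checks that the paper leaves implicit; your index-counting alternative is not used.
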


\begin{proof}
	When $r\leq a_0,$ $M=\overset{\circ}{Q}{}^\vee$ and so $M^\circ/(\ell+h^\vee)\overset{\circ}{Q}{}^\vee=\overset{\circ}{P}/(\ell+h^\vee)M.$ When $r>a_0,$ $M=\overset{\circ}{Q}$ and $M^\circ=\overline{P}^\vee=\overset{\circ}{P}{}^\vee$ (see (\ref{defn-fund-cofund-weights})) and so 
	\begin{align}
	M^\circ/(\ell+h^\vee)\overset{\circ}{Q}{}^\vee&=\overset{\circ}{P}{}^\vee/(\ell+h^\vee)\overset{\circ}{Q}{}^\vee\\
	\overset{\circ}{P}/(\ell+h^\vee)M&=\overset{\circ}{P}/(\ell+h^\vee)\overset{\circ}{Q}.
	\end{align}
	In both cases, the homomorphism $\alpha\mapsto e^{\frac{2 \pi i}{\ell+h^\vee}\langle \alpha,\cdot\rangle}\in \widehat{G}_\ell,$ for $\alpha\in M^\circ/(\ell+h^\vee)\overset{\circ}{Q}{}^\vee,$ gives the result.
\end{proof}

\medskip

Define the group homomorphism
\begin{align}
	&\iota_1:M^\circ/(\ell+h^\vee)\overset{\circ}{Q}{}^\vee\to \widehat{G}_\ell\\
	&\iota_1(\alpha)=e^{\frac{2\pi i}{\ell+h^\vee}\langle \alpha,\cdot \rangle}.
\end{align}
It is straightforward to check that this is a well-defined isomorphism. Moreover, this map induces a $\overset{\circ}{W}$-linear isomorphism between $\C[\widehat{G}_\ell]$ and $C(G_\ell,\C)$ which we also denote $\iota_1$ since $\iota_1(w(\alpha))=e^{\frac{2\pi i}{\ell+h^\vee}\langle w(\alpha), \cdot \rangle}=w\cdot \iota_1(\alpha),$ for all $w\in \overset{\circ}{W}$ and $\alpha\in M^\circ.$ In the same vein, we define the map 
\begin{align}
	&\iota_2:\overset{\circ}{P}/(\ell+h^\vee)M\to \widehat{\widehat{G}}_\ell=G_\ell\\
	&\iota_2(\alpha)=e^{-\frac{2\pi i}{\ell+h^\vee}\langle \alpha,\cdot \rangle}
\end{align}
which induces a $\overset{\circ}{W}$-linear isomorphism between $\C[G_\ell]$ and $C(\widehat{G}_\ell,\C).$ Note that $\widetilde{W}^{e}$ also acts on $\C[\widehat{G}_\ell],$ where $\overset{\circ}{W}$ acts the same, $u\delta$ acts by $(u\delta)\cdot e^{\lambda}=e^{\frac{2\pi i}{\ell+h^\vee}u}e^\lambda,$ $\beta\in M^\circ$ acts by $p_\beta\cdot e^{\lambda}=e^{\lambda+\beta},$ and $\alpha\in \overset{\circ}{P}$ acts by $t_\alpha\cdot e^{\lambda}=e^{-\frac{2\pi i}{\ell+h^\vee}\langle\alpha,\lambda\rangle}e^\lambda.$

\medskip

\begin{remark}
	It is worth pointing out that $\iota_1$ gives an identification of $G_\ell$ with $\widehat{G}_\ell$ in the case that $r\leq a_0.$ This identification allows one to consider the Fourier transform (and thus the $S$-matrix as we will soon discuss) as a map on $C(G_\ell,\C).$ It turns out that a different identification, to be made in section \ref{sec-fusion-rules-invariant}, is more useful when describing the fusion ring.
\end{remark}

\medskip

Now, the Fourier transform also commutes with $\overset{\circ}{W}$ since
\begin{align}
	\widehat{we^{\lambda}}(\iota_1(\beta))&=|G_\ell|^{-1/2}e^{-\frac{2\pi i}{\ell+h^\vee}\langle w(\lambda), \beta\rangle}\\
	&=|G_\ell|^{-1/2}e^{-\frac{2\pi i}{\ell+h^\vee}\langle \lambda, w^{-1}(\beta) \rangle}\\
	&=(w\cdot \widehat{e^{\lambda}})(\iota_1(\beta))\\
	&=|G_\ell|^{-1/2}(w\cdot \iota_2(\lambda))(\beta),
\end{align}
for $\beta\in M^\circ,$ $\lambda\in \overset{\circ}{P}$ and $w\in \overset{\circ}{W}.$ More generally, for any $\alpha,\lambda\in\overset{\circ}{P},$  $\beta,\gamma\in M^\circ$ and $w\in \overset{\circ}{W},$ let $f=(\alpha,\beta,u)w\cdot e^\lambda=t_\alpha p_\beta p_{(u+\frac{1}{2}\langle \alpha,\beta\rangle)\delta}w\cdot e^{\lambda},$ then 
\begin{align}
	\widehat{f}(\iota_1(\gamma))&=|G_\ell|^{-1/2}e^{\frac{2\pi i}{\ell+h^\vee}(\langle \beta,w(\lambda)\rangle+u+\frac{1}{2}\langle \alpha,\beta\rangle)}e^{-\frac{2\pi i}{\ell+h^\vee}\langle w(\lambda)+\alpha,\gamma\rangle}\\
	&=|G_\ell|^{-1/2}e^{\frac{2\pi i}{\ell+h^\vee}(u-\frac{1}{2}\langle \alpha,\beta\rangle)}e^{-\frac{2\pi i}{\ell+h^\vee}\langle w(\lambda)+\alpha,\gamma-\beta \rangle}\\
	&=|G_\ell|^{-1/2}e^{-\frac{2\pi i}{\ell+h^\vee}(\langle \alpha,\gamma-\beta\rangle-(u-\frac{1}{2}\langle \alpha,\beta\rangle))}e^{-\frac{2\pi i}{\ell+h^\vee}\langle \lambda,w^{-1}(\gamma-\beta) \rangle}\\
	&=|G_\ell|^{-1/2}e^{-\frac{2\pi i}{\ell+h^\vee}(\langle \alpha,\gamma\rangle-(u+\frac{1}{2}\langle \alpha,\beta\rangle))}e^{-\frac{2\pi i}{\ell+h^\vee}\langle \lambda,w^{-1}(\gamma-\beta) \rangle}\\
	&=(t_\alpha p_\beta p_{(u+\frac{1}{2}\langle \alpha,\beta\rangle)\delta}w |G_\ell|^{-1/2}e^{-\frac{2\pi i}{\ell+h^\vee}\langle \lambda,\cdot \rangle})(\gamma)\\
	&=|G_\ell|^{-1/2}(t_\alpha p_\beta p_{(u+\frac{1}{2}\langle \alpha,\beta\rangle)\delta}w \iota_2(\lambda))(\gamma).
\end{align}
We have proved the following.

\medskip

\begin{prop}
	The Fourier transform is a $\widetilde{W}^{e}$-linear isomorphism between $\C[G_\ell]$ and $\C[\widehat{G}_\ell]$ such that $e^{\lambda}\mapsto |G_\ell|^{-1/2} \iota_2(\lambda).$ 
\end{prop}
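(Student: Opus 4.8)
The plan is to verify the stated $\widetilde{W}^{e}$-linearity directly from the explicit formulas, since the statement is essentially a summary of the preceding block of displayed computations. First I would recall that both $\C[G_\ell]$ and $\C[\widehat{G}_\ell]$ carry $\widetilde{W}^{e}$-actions that factor through $\widetilde{W}^{e}_\ell$, so it suffices to check compatibility with the generators $w\in\overset{\circ}{W}$, the translations $t_\alpha$ for $\alpha\in\overset{\circ}{P}$, the $p_\beta$ for $\beta\in M^\circ$, and the central $u\delta\in\frac{1}{m}\Z$. Because $\widetilde{W}^{e}\cong\overset{\circ}{W}\ltimes\mathcal{H}^{e}$ and a general Heisenberg element can be written $(\alpha,\beta,u)=t_\alpha p_\beta p_{(u+\frac12\langle\alpha,\beta\rangle)\delta}$, it is enough to check the single identity $\widehat{(\alpha,\beta,u)w\cdot e^\lambda}=|G_\ell|^{-1/2}\,(\alpha,\beta,u)w\cdot\iota_2(\lambda)$ for arbitrary $\alpha,\lambda\in\overset{\circ}{P}$, $\beta\in M^\circ$, $u\in\frac{1}{m}\Z$, and $w\in\overset{\circ}{W}$; this is exactly the chain of equalities carried out in the excerpt just before the proposition.

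The key steps, in order, are: (1) compute the left-hand side by plugging $f=(\alpha,\beta,u)w\cdot e^\lambda=t_\alpha p_\beta p_{(u+\frac12\langle\alpha,\beta\rangle)\delta}w\cdot e^\lambda$ into the definition $\widehat{f}(\chi)=|G_\ell|^{-1/2}\sum_{\gamma}f(\gamma)\chi(\gamma)^*$ with $\chi=\iota_1(\gamma)$, using the module formulas $w\cdot e^\lambda=e^{w(\lambda)}$, $t_\alpha\cdot e^{\mu}=e^{\mu+\alpha}$, $p_\beta\cdot e^{\mu}=e^{\frac{2\pi i}{\ell+h^\vee}\langle\beta,\mu\rangle}e^{\mu}$, and $(u\delta)\cdot e^{\mu}=e^{\frac{2\pi i}{\ell+h^\vee}u}e^{\mu}$; (2) rewrite the resulting exponential by the algebraic identity $\langle w(\lambda),\gamma\rangle-\langle\beta,w(\lambda)\rangle=\langle w(\lambda),\gamma-\beta\rangle$ together with $\langle w(\lambda),\gamma-\beta\rangle=\langle\lambda,w^{-1}(\gamma-\beta)\rangle$ (using $\overset{\circ}{W}$-invariance of $\langle,\rangle$) to move all the $w$-dependence inside $\iota_2(\lambda)$ and separate out the scalar $e^{-\frac{2\pi i}{\ell+h^\vee}(\langle\alpha,\gamma\rangle-(u+\frac12\langle\alpha,\beta\rangle))}$; (3) recognize this scalar and shift as precisely the action of $t_\alpha p_\beta p_{(u+\frac12\langle\alpha,\beta\rangle)\delta}w$ on the function $\gamma\mapsto|G_\ell|^{-1/2}e^{-\frac{2\pi i}{\ell+h^\vee}\langle\lambda,\gamma\rangle}=|G_\ell|^{-1/2}\iota_2(\lambda)(\gamma)$, per the formula $(p_\beta t_\alpha w f)(\gamma)=e^{\frac{2\pi i}{\ell+h^\vee}\langle\beta,\gamma\rangle}f(w^{-1}(\gamma-\alpha))$ on $C(G_\ell,\C)$; (4) conclude that $\widehat{f}=|G_\ell|^{-1/2}(\alpha,\beta,u)w\cdot\iota_2(\lambda)$, and hence, since $\overset{\circ}{W}$-linearity was already checked separately and these elements generate $\widetilde{W}^{e}_\ell$, that the Fourier transform is $\widetilde{W}^{e}$-linear. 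The fact that it is an isomorphism is then immediate: it is the Fourier transform of a finite abelian group composed with the bijections $\iota_1,\iota_2$, hence invertible (its inverse is the inverse Fourier transform of $\S\ref{sec-group-theory-prelims}$, intertwined by $\iota_1^{-1},\iota_2^{-1}$).

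The only genuinely delicate point—the "main obstacle"—is bookkeeping with the quadratic correction terms $\frac12\langle\alpha,\beta\rangle$ coming from the Heisenberg cocycle, and checking that these land consistently in $\frac{1}{m}\Z$ so that the central character $e^{\frac{2\pi i}{\ell+h^\vee}u}$ is well-defined on the quotient defining $\mathcal{H}^{e}_\ell$; this is why $m$ was introduced in Definition \ref{def-DAWG-EDAWG}. Concretely one must confirm that when $f$ is rewritten as $t_\alpha p_\beta p_{(u+\frac12\langle\alpha,\beta\rangle)\delta}w\cdot e^\lambda$, the phase $u-\frac12\langle\alpha,\beta\rangle$ that appears on the spectral side (versus $u+\frac12\langle\alpha,\beta\rangle$ on the $t_\alpha$ side) recombines correctly after pulling $\langle\alpha,\beta\rangle$ through — this sign flip is the content of the passage from the second to the fourth displayed line in the excerpt's computation, and it is where the Fourier transform "sees" the anti-automorphism $\beta\mapsto-\beta$ built into the dual action $t_\alpha\cdot e^{\lambda}=e^{-\frac{2\pi i}{\ell+h^\vee}\langle\alpha,\lambda\rangle}e^{\lambda}$ on $\C[\widehat{G}_\ell]$. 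Everything else is a routine substitution; no new ideas beyond the displayed calculation are required, and indeed the excerpt concludes "We have proved the following," so the proof is simply to assemble those four displayed chains of equalities into the statement.
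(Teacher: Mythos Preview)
Your proposal is correct and follows exactly the paper's approach: the proposition is stated immediately after the displayed chain of equalities computing $\widehat{(\alpha,\beta,u)w\cdot e^\lambda}(\iota_1(\gamma))$, and the paper itself says ``We have proved the following,'' so the proof is precisely the assembly of those computations that you outline. One minor quibble: in step (3) you invoke the formula $(p_\beta t_\alpha w f)(\gamma)=e^{\frac{2\pi i}{\ell+h^\vee}\langle\beta,\gamma\rangle}f(w^{-1}(\gamma-\alpha))$ for $C(G_\ell,\C)$, but the target side is $\C[\widehat{G}_\ell]$ where the roles of $t_\alpha$ and $p_\beta$ are swapped (translation versus phase); the correct identification is with the action on $\C[\widehat{G}_\ell]$ described just before the computation, though of course the end result is the same.
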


\medskip

We will illuminate the close relationship between the Fourier transform and the $S$ transformation. To do so, we must identify the two lattices in $\widetilde{W}^{e}.$ In the case $r>a_0,$ we may identify $\overset{\circ}{P}/(\ell+h^\vee)M$ and $M^\circ/(\ell+h^\vee)\overset{\circ}{Q}{}^\vee$ using the $\Z$-linear map
\begin{align}
\label{varphi-identification-P-Pvee}
	\varphi&:\overset{\circ}{P}\to \overset{\circ}{P}{}^\vee\\
	\varphi(\overline{\Lambda}_j)&=\left\{\begin{matrix}
	\frac{a_j}{a_j^\vee}\overline{\Lambda}_j & \quad \text{ if }X_N^{(r)}\in \{A_{2n-1}^{(2)},D_{n+1}^{(2)}\}\\
	\frac{a_{n+1-j}}{a_{n+1-j}^\vee}\overline{\Lambda}_{n+1-j} & \text{if }X_N^{(r)}\in \{E_{6}^{(2)},D_{4}^{(3)}\}
	\end{matrix}
	\right.
\end{align}
for $j=1,\ldots,n.$ (Note that $\varphi$ relates the root system of $\mathfrak{g}$ to its \emph{adjacent} Lie algebra, which has the same dual Coxeter number (see \cite{Ka}).) In the case that $r\leq a_0,$ $\overset{\circ}{P}/(\ell+h^\vee)M=M^\circ/(\ell+h^\vee)\overset{\circ}{Q}{}^\vee,$ we take $\varphi=Id.$ Using these maps, identify $\C[G_\ell]$ and $\C[\widehat{G}_\ell],$ then for any $\alpha,\beta\in \overset{\circ}{P},$ 
\begin{align}
	\widehat{(\alpha,\beta,0)\cdot e^\lambda}&=\sum_{\gamma\in \overset{\circ}{P}/(\ell+h^\vee)M} |G_\ell|^{-1/2}e^{-\frac{2\pi i}{\ell+h^\vee}(\frac{1}{2}\langle \alpha,\varphi(\beta )\rangle+\langle \lambda+\alpha,\varphi(\gamma)-\varphi(\beta )\rangle)}e^{\gamma}\\
	&=p_{-\alpha}t_{\beta}(-\frac{1}{2}\langle \alpha,\varphi(\beta)\rangle)\delta\sum_{\gamma\in \overset{\circ}{P}/(\ell+h^\vee)M} |G_\ell|^{-1/2}e^{-\frac{2\pi i}{\ell+h^\vee}\langle \lambda,\varphi(\gamma)\rangle}e^{\gamma}\\
	&=t_{\beta}p_{-\alpha}(\frac{1}{2}\langle \alpha,\varphi(\beta)\rangle)\delta\sum_{\gamma\in \overset{\circ}{P}/(\ell+h^\vee)M} |G_\ell|^{-1/2}e^{-\frac{2\pi i}{\ell+h^\vee}\langle \lambda,\varphi(\gamma)\rangle}e^{\gamma}\\
	&=(\beta,-\alpha,-\frac{1}{2}\langle\alpha,\varphi(\beta)\rangle)\widehat{e^{\lambda}}\\
	&=((\alpha,\beta,-\langle\alpha,\varphi(\beta)\rangle/2)\cdot S)\widehat{e^{\lambda}}.
\end{align}

\medskip

If we identify $\C[G_\ell]$ with $\C[\widehat{G}_\ell]$ as above, then we may define an action of $S$ and $T$ on $\C[G_\ell]$ by
\begin{align}
	S\cdot e^{\lambda}&=((\lambda,0,0)\cdot S)\cdot \widehat{e^0}=(0,-\lambda,0)\cdot \widehat{e^0}=|G_\ell|^{-1/2}\sum_{\mu\in G_\ell}e^{-\frac{2\pi i}{\ell+h^\vee}\langle \lambda,\varphi(\mu)\rangle}e^{\mu}\\
	T\cdot e^{\mu}&=((\mu,0,0)\cdot T)\cdot e^0=(\mu,\mu,0)\cdot e^0=e^{\frac{\pi i|\mu|^2}{\ell+h^\vee}}e^\mu.
\end{align}
A priori, this action is a projective action of $SL(2,\Z)$ on $\C[G_\ell]$ and is known as the (projective) \emph{Weil representation} (arising from the associated finite Heisenberg). We delay discussing its linearization for the next section.

\begin{remark}
	The Fourier transform takes functions on $G_\ell$ to functions on $\widehat{G}_\ell.$ In the case that $r\leq a_0,$ these spaces of functions are the same, except that one is equipped with the convolution product and the other with the point-wise product. Therefore, there is a Weil representation on that space. In the case that $r>a_0,$ the Fourier transform maps between \emph{different} function spaces, also with different algebra structures. The identification made above was certainly not canonical.
\end{remark}

\medskip

\subsection{Modular Invariance and $\overset{\circ}{W}$-Alternating Functions}
\label{sec-mod-inv-alternating-space}
Let us use the extended double affine Weyl group framework to study the subspace $\mathcal{A}_\ell\subseteq \mathcal{P}_\ell$ of alternating functions that is intimately related to characters of the affine Lie algebra $\mathfrak{g}$ of type $X_N^{(r)}.$ We show that this subspace is invariant under action of the congruence subgroup $\Gamma_1(r)$ and compute relevant matrices, like the $S$ and $T$ matrices when $r=1,$ with respect to a natural basis. At the end, we construct a projective representation of $\Gamma_1(r)$ on the space spanned by level $\ell$ characters of $\mathfrak{g}$ and show its close similarity to the linear modular group action arising from theta functions.

\medskip

\begin{proposition}
\label{prop-projective-action}
	Let $\mathfrak{g}$ be an affine Lie algebra of type $X_N^{(r)},$ $\ell$ a positive integer, and $\mathcal{P}_\ell$ its truncated polynomial representation. Then $\mathcal{P}_\ell$ is a projective $\Gamma_1(r)\ltimes \widetilde{W}^{e}_\ell$ representation. Moreover, the action of $\Gamma_1(r)$ on $\mathcal{P}_\ell$ commutes with the action of $\overset{\circ}{W}.$
\end{proposition}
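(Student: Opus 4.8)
The plan is to realize $\mathcal{P}_\ell=\C[G_\ell]$ as an irreducible module for the finite Heisenberg group $\mathcal{H}^{e}_\ell$ with a fixed central character, and then obtain the projective $\Gamma_1(r)$-action from Stone--von Neumann uniqueness, using the operators $S$ and $T$ of the preceding pages as concrete representatives. First I would restrict the $\widetilde{W}^{e}_\ell$-action on $\mathcal{P}_\ell=\C[G_\ell]$ to the Heisenberg subgroup $\mathcal{H}^{e}_\ell$, call it $\pi$, and record its structure: the central subgroup $\{(0,0,u)\}$ acts by the scalar $e^{\frac{2\pi i}{\ell+h^\vee}u}$, the operators $t_\alpha$ realize the regular action of $G_\ell$ on $\C[G_\ell]$, and by Lemma \ref{lem-equal-size-quotients} the operators $p_\beta$ realize multiplication by every character of $G_\ell$. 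Hence $\{t_\alpha\}$ and $\{p_\beta\}$ together span $\mathrm{End}_\C(\C[G_\ell])$, so $\pi$ is irreducible, and by Stone--von Neumann it is, up to isomorphism, the unique irreducible $\mathcal{H}^{e}_\ell$-module on which the center acts by that character. (This is the viewpoint behind the ``Weil representation'' remark above.)

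\medskip

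For $g\in\Gamma_1(r)$, formula (\ref{SL2-Z-action-H}) defines an automorphism $\theta_g$ of $\mathcal{H}^{e}_\ell$ (here one uses $rM^\circ\subseteq\overset{\circ}{P}$, and, when $r>a_0$, the identification $\varphi$); crucially $\theta_g$ fixes the center pointwise, since it leaves the last coordinate unchanged. Therefore $x\mapsto\pi(\theta_g(x))$ is again irreducible with the same central character, so by Stone--von Neumann there is an invertible $\rho_\ell(g)$ with $\rho_\ell(g)\,\pi(x)\,\rho_\ell(g)^{-1}=\pi(\theta_g(x))$ for all $x\in\mathcal{H}^{e}_\ell$, and by Schur's lemma $\rho_\ell(g)$ is unique up to a nonzero scalar. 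Comparing $\rho_\ell(g_1g_2)$ with $\rho_\ell(g_1)\rho_\ell(g_2)$ --- both intertwine $\pi$ with its $(g_1g_2)$-twist --- shows they differ by a scalar, and associativity makes these scalars into a $2$-cocycle; thus $\rho_\ell$ is a projective representation of $\Gamma_1(r)$ on $\mathcal{P}_\ell$. Explicit representatives for the generators of $\Gamma_1(r)$ are supplied by the displayed computations just before the statement: these show precisely that $T$ and $S$ conjugate $\pi$ into its $T$- and $S$-twists, so one may take $\rho_\ell(u_{12})=T^{-1}$, and, when $r=1$, $\rho_\ell(S)=S$; the remaining generator $u_{21}^{r}$ then has the representative $ST^{-r}S^{-1}$ built from these.

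\medskip

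To assemble the projective representation of $\Gamma_1(r)\ltimes\widetilde{W}^{e}_\ell$, recall that $\Gamma_1(r)$ acts on $\widetilde{W}^{e}_\ell=\overset{\circ}{W}\ltimes\mathcal{H}^{e}_\ell$ trivially on $\overset{\circ}{W}$ and through $\theta_g$ on $\mathcal{H}^{e}_\ell$, so the cross relations $g\sigma g^{-1}=g\cdot\sigma$ split into two checks. For $\sigma\in\mathcal{H}^{e}_\ell$ the relation holds by the very construction of $\rho_\ell(g)$ as a $\theta_g$-intertwiner. For $\sigma=w\in\overset{\circ}{W}$ we must show that $\rho_\ell(g)$ commutes with the action of $w$ --- this is the ``moreover'' assertion. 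It is enough to verify this on the generators: $T$ commutes with every $w\in\overset{\circ}{W}$ since $w$ preserves $\langle,\rangle$, so $|w\mu|^{2}=|\mu|^{2}$ and $T(w\cdot e^{\mu})=w\cdot(Te^{\mu})$; and $S$ commutes with $\overset{\circ}{W}$ because the Fourier transform is $\overset{\circ}{W}$-linear (as shown before the statement) and $\varphi$ is $\overset{\circ}{W}$-equivariant. The operators commuting with all of $\overset{\circ}{W}$ form a subalgebra of $\mathrm{End}_\C(\C[G_\ell])$ closed under inverses and containing $T$ and $S$, hence containing every $\rho_\ell(g)$ up to a scalar; this gives the semidirect-product representation and the commutation statement at once.

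\medskip

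The step I expect to be the main obstacle is the bookkeeping in the twisted case $r>a_0$: one must verify that (\ref{SL2-Z-action-H}) genuinely defines automorphisms of $\mathcal{H}^{e}_\ell$ once $\overset{\circ}{P}$ and $M^\circ$ are transported into one another via $\varphi$, and that $\varphi$ --- which in fundamental-weight coordinates looks like a rescaling and relabeling but is, after the form identification, a $\overset{\circ}{W}$-equivariant linear map --- is compatible with both the $\overset{\circ}{W}$-action and the $\Gamma_1(r)$-action. Once these compatibilities (already built into the setup of $\iota_1$, $\iota_2$, $\varphi$) are in place, the Stone--von Neumann argument, and with it the whole proof, runs uniformly in $r$.
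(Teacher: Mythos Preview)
Your approach is essentially the same as the paper's, but you have written out in detail what the paper compresses into two sentences. The paper simply asserts that $\mathcal{P}_\ell$ is an irreducible $\widetilde{W}^{e}_\ell$-representation, cites a ``standard argument'' (the Stone--von Neumann/Schur mechanism you describe) for the projective $\Gamma_1(r)$-action, and then deduces commutation with $\overset{\circ}{W}$ from the single observation that the $\Gamma_1(r)$-action on $\widetilde{W}^{e}_\ell$ fixes $\overset{\circ}{W}$ pointwise. Your argument unpacks exactly this: you prove irreducibility over the smaller group $\mathcal{H}^{e}_\ell$ (which is stronger and makes the Stone--von Neumann step cleaner), build the intertwiners $\rho_\ell(g)$, and then verify commutation with $\overset{\circ}{W}$ by checking it on the explicit generators $T$ and $S$.

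The one genuine difference is in how the commutation with $\overset{\circ}{W}$ is established. The paper's one-line argument is conceptual: since $\theta_g$ is an automorphism of the \emph{full} group $\widetilde{W}^{e}_\ell$ fixing $\overset{\circ}{W}$, any $\widetilde{W}^{e}_\ell$-intertwiner between $\pi$ and $\pi\circ\theta_g$ must commute with $\pi(w)$. This sidesteps any explicit computation with $\varphi$. Your route --- checking $T$ and $S$ directly and then propagating to words --- is more concrete, and you are right to flag the $\overset{\circ}{W}$-equivariance of $\varphi$ as the point needing care when $r>a_0$. Either way works; the paper's version is shorter because it never needs to name $T$ or $S$, while yours has the advantage of producing explicit operators and making clear that the projective representatives can be chosen to commute with $\overset{\circ}{W}$ on the nose (not merely up to a scalar), which is what the subsequent corollary about isotypic components actually uses.
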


\begin{proof}
	$\mathcal{P}_\ell$ is an irreducible $\widetilde{W}^{e}_\ell$ representation and so, a standard argument (see for example [GHH, \textsection 1.3]) shows that $\mathcal{P}_\ell$ admits a projective action of $\Gamma_1(r)\ltimes \widetilde{W}^{e}_\ell,$ i.e., there is a homomorphism $\Gamma_1(r)\to \text{PGL}(\mathcal{P}_\ell).$ The action of $\Gamma_1(r)$ commutes with that of $\overset{\circ}{W}$ because the action of $\Gamma_1(r)$ on $\widetilde{W}^{e}$ leaves $\overset{\circ}{W}$-invariant.
\end{proof}

\medskip

\begin{corollary}
\label{cor-projective-action-congruence}
	Let $\mathcal{P}_\ell=\bigoplus_{\lambda} n_\lambda V_\lambda,$ where $V_{\lambda}$ is an irreducible $\overset{\circ}{W}$-module corresponding to $\lambda$ and $\lambda$ ranges over isomorphism classes of irreducible $\overset{\circ}{W}$-modules, then, for each $\lambda,$ $n_\lambda V_\lambda$ is invariant under the action of $\Gamma_1(r).$ In particular, $\Gamma_1(r)$ stabilizes $\mathcal{A}_\ell$ and $\mathcal{P}_\ell^{\overset{\circ}{W}}.$ 
\end{corollary}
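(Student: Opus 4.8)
The plan is to deduce this purely as an application of Proposition \ref{prop-projective-action}, the Schur-type lemma, and elementary isotypic-decomposition bookkeeping. The key input is that the $\Gamma_1(r)$-action on $\mathcal{P}_\ell$ commutes with the $\overset{\circ}{W}$-action; equivalently, each operator in the image of $\Gamma_1(r)$ in $\mathrm{PGL}(\mathcal{P}_\ell)$ is, up to scalar, a genuine $\overset{\circ}{W}$-module endomorphism of $\mathcal{P}_\ell$. Fix a lift of a given $\gamma\in\Gamma_1(r)$ to an operator $A_\gamma\in GL(\mathcal{P}_\ell)$ realizing the projective action (such a lift exists since the action is by a homomorphism into $\mathrm{PGL}$). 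Then $A_\gamma$ commutes with $\overset{\circ}{W}$, so $A_\gamma\in \mathrm{End}_{\overset{\circ}{W}}(\mathcal{P}_\ell)$, and hence $A_\gamma$ preserves each isotypic component $n_\lambda V_\lambda$ of the $\overset{\circ}{W}$-decomposition $\mathcal{P}_\ell=\bigoplus_\lambda n_\lambda V_\lambda$: indeed, for finite groups over $\C$, any module endomorphism of a semisimple module is block-diagonal with respect to the isotypic decomposition, since $\mathrm{Hom}_{\overset{\circ}{W}}(V_\lambda,V_\mu)=0$ for $\lambda\neq\mu$. Since this holds for every $\gamma$, each $n_\lambda V_\lambda$ is $\Gamma_1(r)$-invariant.

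Next I would identify the two specific isotypic pieces named in the statement. The space $\mathcal{A}_\ell$ of $\overset{\circ}{W}$-alternating functions is by definition the $\varepsilon$-isotypic component for the sign character $\varepsilon$ of $\overset{\circ}{W}$, i.e.\ $\mathcal{A}_\ell=\{f\in\mathcal{P}_\ell : w\cdot f=\varepsilon(w)f \text{ for all } w\in\overset{\circ}{W}\}$, which is the $n_\lambda V_\lambda$ block for $V_\lambda$ the one-dimensional sign representation. Likewise $\mathcal{P}_\ell^{\overset{\circ}{W}}$ is the block for the trivial representation. Both are therefore $\Gamma_1(r)$-invariant by the first paragraph. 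A minor point to address is that invariance of a block under the projective $\Gamma_1(r)$-action is the same as invariance under any lift, since scaling a subspace-preserving operator still preserves the subspace; so the projective-versus-linear distinction is harmless here.

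The main (really the only) obstacle is making the commutation argument airtight: Proposition \ref{prop-projective-action} asserts that the $\Gamma_1(r)$-action commutes with the $\overset{\circ}{W}$-action at the projective level, and I want a statement about honest operators. The clean way is to observe that if $[A_\gamma]\in\mathrm{PGL}(\mathcal{P}_\ell)$ commutes with the image of $\overset{\circ}{W}$ in $GL(\mathcal{P}_\ell)$, then for each $w$ we have $A_\gamma\, w\, A_\gamma^{-1} = c(w)\, w$ for some scalar $c(w)\in\C^\times$; but $w\mapsto c(w)$ is then a one-dimensional character of the finite group $\overset{\circ}{W}$, hence takes values in roots of unity, and comparing determinants (or using that $\overset{\circ}{W}$ is generated by involutions, forcing $c(w)^2=1$ and then $c$ a character) one still only gets $c$ to be a linear character, not necessarily trivial. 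This is the subtle point: conjugation by $A_\gamma$ could in principle act on the $\overset{\circ}{W}$-representation by an automorphism, permuting isotypic components related by tensoring with a linear character. To rule this out I would invoke that the $\Gamma_1(r)$-action on $\widetilde{W}^e$ (hence on $\overset{\circ}{W}\subseteq\widetilde{W}^e$) is \emph{trivial} on $\overset{\circ}{W}$ — as recorded in the proof of Proposition \ref{prop-projective-action} (``the action of $\Gamma_1(r)$ on $\widetilde{W}^e$ leaves $\overset{\circ}{W}$-invariant'', and in fact fixes $\overset{\circ}{W}$ pointwise, since $\Gamma_1(r)$ acts only on the lattice coordinates $(\alpha,\beta,u)$ and not on the $\overset{\circ}{W}$-factor). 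The compatibility relation for the projective action of the semidirect product $\Gamma_1(r)\ltimes\widetilde{W}^e_\ell$ then reads $A_\gamma\, w\, A_\gamma^{-1} = \overline{(\gamma\cdot w)} = \overline{w}$ in $\mathrm{PGL}$, i.e.\ $c(w)=1$, so $A_\gamma$ genuinely commutes with each $w$. With this in hand the isotypic-block argument above goes through verbatim, and applying it to the sign and trivial characters of $\overset{\circ}{W}$ yields the invariance of $\mathcal{A}_\ell$ and $\mathcal{P}_\ell^{\overset{\circ}{W}}$.
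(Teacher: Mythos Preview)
Your argument is correct and matches the paper's (implicit) approach: the corollary is stated there without proof, as an immediate consequence of Proposition~\ref{prop-projective-action}. One remark on your handling of the ``subtle point'': the lifts $A_\gamma$ are \emph{constructed} (in the standard argument the paper cites from [GHH]) as intertwiners satisfying $A_\gamma\,\rho(x)\,A_\gamma^{-1}=\rho(\gamma\cdot x)$ for the honest linear $\widetilde{W}^e_\ell$-action $\rho$ on $\mathcal{P}_\ell$, so for $w\in\overset{\circ}{W}$ (fixed by $\Gamma_1(r)$) one gets $A_\gamma\,\rho(w)\,A_\gamma^{-1}=\rho(w)$ as operators directly, and this is unaffected by rescaling $A_\gamma$. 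Your final line deducing ``$c(w)=1$'' from an equality ``in $\mathrm{PGL}$'' is a non sequitur as written---an equality in $\mathrm{PGL}$ only gives $c(w)\in\C^\times$---but the conclusion is right for the reason just stated, so the isotypic-component argument goes through.
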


\medskip

Much has been written about lifting projective representations of $SL(2,\Z)$ (and symplectic groups more generally), see for example \cite{Wei}, \cite{Sha}, \cite{GH}, \cite{LV}. Our situation is better than the general case since the action of $\Gamma_1(r)$ factors through a finite group, i.e., the kernel of the action is contained in $SL(2,r\Z/(\ell+h^\vee)\Z).$ For example, when $r=1$ and $\ell+h^\vee$ is not a multiple of four it is a general fact that these representations can be made linear \cite{Bey}. Indeed, our projective action can be always lifted to a linear representation of $SL(2,\Z)$ by using theta functions (see \cite{KP}). The question of linearizing this representation fully in terms of the double affine Weyl group is beyond the scope of this paper and we hope to address it in the future. For the purposes of describing the fusion ring, the projective action is sufficient.

\medskip

Consider the subspace $\mathcal{A}_\ell\subseteq \C(G_\ell,\C)$ of functions $f$ satisfying
\begin{align}
	(wf)(\alpha)=\varepsilon(\alpha)f(\alpha)
\end{align}
for all $w\in \overset{\circ}{W}$ (recall that $\varepsilon$ is the sign character \textsection \ref{sec-characters-affine-lie}) As elements in $\C[G_\ell],$ any group algebra element can be projected to $\mathcal{A}_\ell$ by applying the operator
\begin{align}
	\mathcal{E}=\frac{1}{|\overset{\circ}{W}|}\sum_{w\in \overset{\circ}{W}}\varepsilon(w)w.
\end{align}
Suppose that $\mu\in \mathfrak{h}^*,$ such that $\overline{\mu}\in \overset{\circ}{P},$ is dominant and that $s_i(\mu)=\mu$ for some $i=0,1,\ldots,N,$ then it is easy to see that $\mathcal{E}(e^{\overline{\mu}})=0$ in $\C[G_\ell].$ It follows that the group algebra elements
\begin{align}
\label{alternants-defn}
	A_{\overline{\lambda}}=|\overset{\circ}{W}|\mathcal{E}\cdot e^{\overline{\lambda}}=\sum_{w\in \overset{\circ}{W}}\varepsilon(w)e^{w(\overline{\lambda})}
\end{align}
for $\lambda\in P_{\ell+h^\vee}^{++}$ span $\mathcal{A}_\ell.$ Indeed, $\{A_{\overline{\lambda}+\overline{\rho}}\}_{\lambda\in P^{+}_\ell}$ forms a basis of this space (see \textsection\ref{sec-ALA-prelims}). 

\medskip

\begin{proposition}
\label{prop-alternating-linear-independence}
	When $r\leq a_0,$ the set of functions $\{\widehat{A}_{\overline{\lambda}+\overline{\rho}}\}_{\lambda\in P^{+}_\ell}$ is linearly independent as functions on $\{\iota_1(\overline{\mu}+\overline{\rho})\}_{\mu\in P_{\ell}^{+}}.$  When $r>a_0,$ the set of functions $\{\widehat{A}_{\overline{\lambda}+\overline{\rho}}\}_{\lambda\in P^{+}_\ell}$ is linearly independent as functions on $\{\iota_1(\overline{\mu}+\overline{\rho}^\vee)\}_{\mu\in P_{\ell}^{\vee+}}.$ Moreover, when $r>a_0,$ we have $|P^{+}_\ell|=|P_{\ell}^{\vee+}|.$ 
\end{proposition}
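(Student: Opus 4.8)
The plan is to treat the two cases separately, beginning with the arithmetically cleaner case $r\le a_0$, and in both cases to reduce linear independence of the Fourier-transformed alternants to nonvanishing/invertibility of a transition matrix built from the $S$-matrix formula recorded in the preliminaries. First I would write out $\widehat{A}_{\overline\lambda+\overline\rho}$ explicitly: since the Fourier transform is $\overset{\circ}{W}$-linear (as shown in the paragraph preceding the proposition where $\widehat{we^\mu}=w\cdot\widehat{e^\mu}$ via $\iota_2$) and $A_{\overline\lambda+\overline\rho}=\sum_{w\in\overset{\circ}{W}}\varepsilon(w)e^{w(\overline\lambda+\overline\rho)}$, we get, evaluated at $\iota_1(\overline\mu+\overline\rho)$,
\[
\widehat{A}_{\overline\lambda+\overline\rho}\bigl(\iota_1(\overline\mu+\overline\rho)\bigr)=|G_\ell|^{-1/2}\sum_{w\in\overset{\circ}{W}}\varepsilon(w)\,e^{-\frac{2\pi i}{\ell+h^\vee}\langle w(\overline\lambda+\overline\rho),\,\overline\mu+\overline\rho\rangle}.
\]
Up to the global constant $|G_\ell|^{-1/2}$ this is exactly the quantity the Verlinde-formula subsection says $S_{\lambda\mu}$ is proportional to. So the claim in the case $r\le a_0$ is precisely that the matrix $\bigl(S_{\lambda\mu}\bigr)_{\lambda,\mu\in P_\ell^+}$ is nonsingular — and I would invoke the well-known fact (citing \cite{Ka}) that this finite-dimensional $S$-matrix is symmetric and unitary, hence invertible; this gives linear independence of the rows $\{\widehat{A}_{\overline\lambda+\overline\rho}\}$ as functions on the index set $\{\iota_1(\overline\mu+\overline\rho)\}_{\mu\in P_\ell^+}$. (Alternatively, one can argue that $|P_\ell^+|$ equals both $\dim\mathcal{A}_\ell$ and the cardinality of the evaluation set, so surjectivity of evaluation forces independence — but unitarity of $S$ is the cleanest route.)

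For the case $r>a_0$ the subtlety is that the Fourier transform now lands in the \emph{different} function space $C(\widehat G_\ell,\C)$ with $\widehat G_\ell\cong M^\circ/(\ell+h^\vee)\overset{\circ}{Q}{}^\vee=\overset{\circ}{P}{}^\vee/(\ell+h^\vee)\overset{\circ}{Q}{}^\vee$, so the natural evaluation points are $\iota_1(\overline\mu+\overline\rho^\vee)$ for $\mu\in P_\ell^{\vee+}$. The key observation is that the map $\varphi:\overset{\circ}{P}\to\overset{\circ}{P}{}^\vee$ of (\ref{varphi-identification-P-Pvee}) relates $X_N^{(r)}$ to its adjacent algebra with the \emph{same} dual Coxeter number $h^\vee$, and under $\varphi$ the pairing $\langle w(\overline\lambda+\overline\rho),\varphi(\overline\mu+\overline\rho^\vee)\rangle$ again produces the $S$-matrix of the affine algebra $\mathfrak g$ at level $\ell$ (the $S$-matrix does not care which of the two "dual" lattices one uses to index it; this is the standard fact that for twisted types the $S$-matrix is indexed by $P_\ell^+$ on one side and $P_\ell^{\vee+}$ on the other, and is still symmetric/unitary up to the diagonal rescaling noted after (\ref{verlinde-formula})). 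So linear independence again follows from invertibility of this (now possibly rectangular-looking, but actually square) matrix. The equality $|P_\ell^+|=|P_\ell^{\vee+}|$ I would establish as a corollary of the \emph{same} identification: $\varphi$ induces a bijection between $\overline P_{\ell+h^\vee}^{++}$ and $\overline P{}^\vee_{\ell+h^\vee}{}^{++}$ — concretely, it permutes fundamental weights (possibly composing with the diagram automorphism $j\mapsto n+1-j$ for $E_6^{(2)}$ and $D_4^{(3)}$) and rescales, hence maps the fundamental alcove $A_{\ell+h^\vee}$ for the Weyl group to the corresponding alcove on the dual side, and these alcoves' lattice points are counted by $P_\ell^+$ and $P_\ell^{\vee+}$ respectively; alternatively, both numbers equal $|G_\ell|^{1/2}$-type combinatorial quantities, but the alcove bijection is the honest argument.

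The main obstacle I anticipate is bookkeeping, not conceptual: making the chain of identifications $\C[G_\ell]\cong C(\widehat G_\ell,\C)$, $\iota_1$, $\iota_2$, and $\varphi$ fully explicit so that one can verify that the matrix $\bigl(\widehat A_{\overline\lambda+\overline\rho}(\iota_1(\overline\mu+\overline\rho^{(\vee)}))\bigr)_{\lambda,\mu}$ literally \emph{is} (a diagonal rescaling of) the affine $S$-matrix at level $\ell$, rather than merely resembling one. In particular one must check that the restriction of $\iota_1$ to the sublattice $\{\overline\mu+\overline\rho^\vee\}$ separates the classes indexed by $P_\ell^{\vee+}$, i.e. that distinct dominant $\overline\mu$ give distinct characters on $G_\ell$ — this is where the regularity ($+\overline\rho$ or $+\overline\rho^\vee$) and the choice of $\ell+h^\vee$ as modulus are essential, and it is essentially the statement that the affine Weyl group $W^{aff}_{\ell+h^\vee}$ acts freely on regular weights with fundamental domain $P_\ell^{++}$. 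Once that separation is in hand, the independence is immediate from the nondegeneracy of the $S$-matrix, so the real work is all in the first two steps.
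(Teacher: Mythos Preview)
Your proposal is correct but follows a genuinely different route from the paper. The paper argues \emph{internally}: it first observes that the alternants $A_{\overline\lambda+\overline\rho}$ are linearly independent in $\C[G_\ell]$ (since $P_{\ell+h^\vee}^{++}$, resp.\ $P_{\ell+h^\vee}^{\vee++}$, is a fundamental domain), so their Fourier transforms are linearly independent as functions on \emph{all} of $M^\circ/(\ell+h^\vee)\overset{\circ}{Q}{}^\vee$; then it uses the affine Weyl group symmetry $\widehat{A}_{\overline\lambda+\overline\rho}(\iota_1(w\overline\alpha))=\varepsilon(w)\widehat{A}_{\overline\lambda+\overline\rho}(\iota_1(\overline\alpha))$ and vanishing on reflection walls to conclude that linear independence on the full quotient is already detected on the regular dominant points $\{\iota_1(\overline\mu+\overline\rho^{(\vee)})\}$. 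You, by contrast, identify the evaluation matrix directly with (a scalar multiple of) the Kac--Peterson $S$-matrix and invoke its unitarity from \cite{Ka} as an external input. Your argument is shorter and perfectly valid, but it imports precisely the modular datum the paper is trying to \emph{derive} from the double affine Weyl group; the paper's argument is self-contained and keeps the logical flow ``DAWG $\Rightarrow$ $S$-matrix properties'' rather than the reverse. (Note that the paper records unitarity of $S_{\mathcal A}$ only \emph{after} this proposition, via orthogonality of characters --- so your order could be made internal too, but that is not what you wrote.) For the cardinality statement $|P_\ell^+|=|P_\ell^{\vee+}|$, your use of the bijection $\varphi$ matches the paper's argument exactly.
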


\begin{proof}
	To start, since (the closures of) $P_{\ell+h^\vee}^{++}$ and $P_{\ell+h^\vee}^{\vee++}$ are fundamental domains for $P_{\ell+h^\vee}$ and $P_{\ell+h^\vee}^\vee$ when $r\leq a_0$ and $r>a_0,$ respectively, $\{A_{\overline{\lambda}+\overline{\rho}}\}_{\lambda\in P^{+}_\ell}$ is linearly independent in $\C[G_\ell].$ It follows that $\{\widehat{A}_{\overline{\lambda}+\overline{\rho}}\}_{\lambda\in P^{+}_\ell}$ is linearly independent as functions on $M^\circ/(\ell+h^\vee)\overset{\circ}{Q}{}^\vee.$
	
	Consider the case $r>a_0.$ For any $\alpha\in \mathfrak{h}^*$ such that $\overline{\alpha}\in M^\circ$ and $\lambda\in P^{+}_\ell,$ we have that 
	\begin{align}
	\label{alternating-evals}
		\widehat{A}_{\overline{\lambda}+\overline{\rho}}(\iota_1(\overline{\alpha}))=|G_\ell|^{-1/2}\sum_{w\in \overset{\circ}{W}}\varepsilon(w)e^{-\frac{2\pi i}{\ell+h^\vee}\langle w(\overline{\lambda}+\overline{\rho}),\overline{\alpha}\rangle}.
	\end{align}
	We may act on $\overline{\alpha}$ by the (affine) Weyl group $\overset{\circ}{W}\ltimes (\ell+h^\vee)\overset{\circ}{Q}{}^\vee$ (which may be identified with the Weyl group of the adjacent algebra when $r>a_0$) and this will only change the value of $\widehat{A}_{\overline{\lambda}+\overline{\rho}}(\iota_1(\overline{\alpha}))$ by a sign at most. Therefore, $\{\widehat{A}_{\overline{\lambda}+\overline{\rho}}\}_{\lambda\in P^{+}_\ell}$ is linearly independent as functions on $P_{\ell+h^\vee}^{\vee+}.$ But the quantity (\ref{alternating-evals}) vanishes if $s_i(\alpha)=\alpha$ for any $i=0,1,\ldots,n,$ where $s_i$ are the generators (see \textsection \ref{sec-ALA-prelims}). This proves the linear independence of $\{\widehat{A}_{\overline{\lambda}+\overline{\rho}}\}_{\lambda\in P^{+}_\ell}$ on the set $\{\iota_1(\overline{\mu}+\overline{\rho}^\vee)\}_{\mu\in P_{\ell}^{\vee+}}.$ A similar, yet slightly simpler, argument proves the statement in the case that $r\leq a_0.$
	
	To prove the last statement, recall the map $\varphi$ defined in (\ref{varphi-identification-P-Pvee}), and first note that for all $\mu\in P^{+}_\ell,$ $\varphi(\overline{\mu}+\overline{\rho})=\overline{\lambda}$ for some $\lambda\in P_{\ell+h^\vee}^{\vee++}.$ Second, note that for any $\lambda\in P_{\ell}^{\vee+},$ $\lambda+\rho^\vee$ has level $\ell+h^\vee$ and $\varphi^{-1}(\overline{\lambda}+\overline{\rho}^\vee)=\varphi^{-1}(\overline{\lambda})+\overline{\rho}=\overline{\mu}+\overline{\rho}$ for some $\mu\in P_\ell^+.$
\end{proof}

\bigskip

Using the result above, we can explicitly write the Fourier transform 
\begin{align}
	\widehat{A}_{\overline{\lambda}+\overline{\rho}}=\left\{\begin{matrix}
	|G_\ell|^{-1/2}\sum_{{\overline{\mu}\in P_{\ell}^{+}}}\widehat{A}_{\overline{\lambda}+\overline{\rho}}(\iota_1(\overline{\mu}+\overline{\rho}))A_{\overline{\mu}+\overline{\rho}} & \text{if }r\leq a_0 \\
	|G_\ell|^{-1/2}\sum_{{\overline{\mu}\in P_{\ell}^{\vee+}}}\widehat{A}_{\overline{\lambda}+\overline{\rho}}(\iota_1(\overline{\mu}+\overline{\rho}^\vee))A_{\overline{\mu}+\overline{\rho}^\vee} & \text{if }r> a_0
	\end{matrix}
	\right. ,
\end{align}
where $\overline{\lambda}\in P_{\ell}^{+}.$ Let us denote these quantities by
\begin{align}
	a_{\lambda\mu}=\widehat{A}_{\overline{\lambda}+\overline{\rho}}(\iota_1(\overline{\mu}+\overline{\rho})),
\end{align}
where $\lambda\in P_{\ell}^{+}$ and $\mu\in P_{\ell}^{+},$ and the analogous quantities in the case that $r>a_0.$ 

\medskip

Define the matrices
\begin{align}
\label{defn-S-matrix-alternating}
	T_{\mathcal{A}}=(\delta_{\lambda,\mu}e^{\frac{\pi i|\overline{\lambda}+\overline{\rho}|^2}{\ell+h^\vee}})_{\lambda,\mu\in P_\ell^+} \text{ and } S_{\mathcal{A}}=(a_{\lambda\mu})_{\lambda,\mu},
\end{align}
where $\lambda,\mu$ range over the sets described at the end of the previous paragraph. A direct computation involving the orthogonality of characters proves the following proposition. (The proof is left to the reader.)

\medskip

\begin{proposition}
\label{prop-S-symmetric-unitary}
	If we identify $P_{\ell}^{+}$ and $P_{\ell}^{\vee+}$ using the map $\varphi$ defined in (\ref{varphi-identification-P-Pvee}), then $S_{\mathcal{A}}$ is a symmetric, unitary matrix.
\end{proposition}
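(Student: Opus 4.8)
The plan is to exhibit $S_{\mathcal{A}}$ as (essentially) the matrix of the finite Fourier transform restricted to the $\overset{\circ}{W}$-alternating subspace, written in the orthonormal-up-to-scaling basis of alternants $A_{\overline{\lambda}+\overline{\rho}}$, and then to deduce symmetry and unitarity from the corresponding properties of the Fourier transform together with the symmetry $w(\overline{\lambda}+\overline{\rho})\leftrightarrow$ pairing symmetry. First I would unwind the definition: by \eqref{alternating-evals} (and its $r\le a_0$ analogue), $a_{\lambda\mu}=\widehat{A}_{\overline{\lambda}+\overline{\rho}}(\iota_1(\overline{\mu}+\overline{\rho}))=|G_\ell|^{-1/2}\sum_{w\in\overset{\circ}{W}}\varepsilon(w)e^{-\frac{2\pi i}{\ell+h^\vee}\langle w(\overline{\lambda}+\overline{\rho}),\overline{\mu}+\overline{\rho}\rangle}$, where in the $r>a_0$ case $\overline{\mu}+\overline{\rho}$ is read through $\varphi$ so that the pairing $\langle w(\overline{\lambda}+\overline{\rho}),\varphi(\overline{\mu}+\overline{\rho})\rangle$ makes sense in $\mathfrak{h}^*$ versus its dual. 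Symmetry $a_{\lambda\mu}=a_{\mu\lambda}$ is then immediate once one observes (i) the summand is invariant under $w\mapsto w^{-1}$ after swapping $\lambda\leftrightarrow\mu$, since $\langle w(\overline{\lambda}+\overline{\rho}),\overline{\mu}+\overline{\rho}\rangle=\langle \overline{\lambda}+\overline{\rho},w^{-1}(\overline{\mu}+\overline{\rho})\rangle$ and $\varepsilon(w)=\varepsilon(w^{-1})$, and (ii) the form $\langle\,,\,\rangle$ is symmetric; in the $r>a_0$ case one additionally needs that $\varphi$ is compatible with the form, i.e. $\langle\nu^{-1}(\overline{\lambda}+\overline{\rho}),\varphi(\overline{\mu}+\overline{\rho})\rangle$ is symmetric under the swap, which is exactly the content of identifying $P^+_\ell$ with $P^{\vee+}_\ell$ via $\varphi$ (the adjacent-algebra rescaling built into \eqref{varphi-identification-P-Pvee}).

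For unitarity, I would use the standard Weyl-character orthogonality: the functions $\overline{\lambda}\mapsto\sum_{w}\varepsilon(w)e^{-\frac{2\pi i}{\ell+h^\vee}\langle w(\overline{\lambda}+\overline{\rho}),\cdot\rangle}$ are, up to the normalizing factor, the ``discrete'' version of the Weyl numerator, and their inner products over the finite index set $P^+_\ell$ (equivalently, a fundamental domain for the relevant affine Weyl group acting on $P_{\ell+h^\vee}$, resp.\ $P^\vee_{\ell+h^\vee}$) reduce to orthogonality of characters of the finite group $G_\ell$ after summing over $\overset{\circ}{W}$-orbits and accounting for the $|\overset{\circ}{W}|$-fold overcounting. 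Concretely: $(S_{\mathcal{A}}S_{\mathcal{A}}^*)_{\lambda\nu}=\sum_{\mu}a_{\lambda\mu}\overline{a_{\nu\mu}}$, and after expanding both alternating sums one gets $|G_\ell|^{-1}\sum_{\mu}\sum_{w,w'}\varepsilon(ww')e^{-\frac{2\pi i}{\ell+h^\vee}\langle w(\overline{\lambda}+\overline{\rho})-w'(\overline{\nu}+\overline{\rho}),\overline{\mu}+\overline{\rho}\rangle}$; folding the sum over $\mu\in P^+_\ell$ together with the $w'$-sum converts it into a full sum over a set of representatives of $G_\ell$ (using that the alternants vanish on $\overset{\circ}{W}$-fixed points and that $\{w'(\overline{\mu}+\overline{\rho})\}$ as $w'$ ranges over $\overset{\circ}{W}$ and $\mu$ over $P^+_\ell$ covers the regular part of a fundamental domain for $(\ell+h^\vee)$-translations), and then $|G_\ell|^{-1}\sum_{g\in G_\ell}e^{\frac{2\pi i}{\ell+h^\vee}\langle w(\overline{\lambda}+\overline{\rho})-w'(\overline{\nu}+\overline{\rho}),g\rangle}=\delta_{w(\overline{\lambda}+\overline{\rho}),\,w'(\overline{\nu}+\overline{\rho})}$ by the linear-character orthogonality on $G_\ell$ (this is where $\iota_1$ being an isomorphism, Lemma \ref{lem-equal-size-quotients}, is used). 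Since $\overline{\lambda}+\overline{\rho}$ and $\overline{\nu}+\overline{\rho}$ are both regular dominant representatives, the Kronecker delta forces $w=w'$ and $\lambda=\nu$, and summing $\varepsilon(w)^2=1$ over the stabilizer-free orbit gives $(S_{\mathcal{A}}S_{\mathcal{A}}^*)_{\lambda\nu}=\delta_{\lambda\nu}$.

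The main obstacle I anticipate is purely bookkeeping in the twisted ($r>a_0$) case: one must verify that the combinatorial identification $P^+_\ell\cong P^{\vee+}_\ell$ via $\varphi$ really does make the pairing $\langle w(\overline{\lambda}+\overline{\rho}),\varphi(\overline{\mu}+\overline{\rho})\rangle$ symmetric in $\lambda$ and $\mu$ and simultaneously compatible with the folding argument that turns the $\mu$-sum into a sum over $G_\ell$ — in other words, that the numerators indexed by $\lambda$ (living naturally on $M^\circ/(\ell+h^\vee)\overset{\circ}{Q}{}^\vee$) and the evaluation points indexed by $\mu$ (living on $\overset{\circ}{P}/(\ell+h^\vee)M$) are genuinely dual under $\iota_1$ and get correctly matched up by $\varphi$. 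This is exactly the content already set up in Proposition \ref{prop-alternating-linear-independence} (in particular $|P^+_\ell|=|P^{\vee+}_\ell|$ and the statement that $\varphi(\overline{\mu}+\overline{\rho})=\overline{\lambda}$ for a regular dominant $\lambda$), so the argument amounts to feeding those facts into the orthogonality computation above; the untwisted case ($r\le a_0$, $\varphi=\mathrm{Id}$) is the clean special case where $G_\ell$ and $\widehat{G}_\ell$ are identified outright. Everything else — symmetry of the form, $\varepsilon$ being a homomorphism, vanishing of alternants on walls, orthogonality of characters of a finite abelian group — is standard, which is presumably why the authors leave the computation to the reader.
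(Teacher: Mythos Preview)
Your proposal is correct and is precisely the ``direct computation involving the orthogonality of characters'' that the paper alludes to before leaving the proof to the reader: symmetry from the $w\mapsto w^{-1}$ substitution together with symmetry of $\langle\,,\,\rangle$ (plus the $\varphi$-compatibility in the twisted case), and unitarity from folding the $\mu$-sum with the $\overset{\circ}{W}$-sum into a full sum over $\widehat{G}_\ell$ and invoking character orthogonality (equivalently, Plancherel for the finite Fourier transform). The only cosmetic point is that in the folding step the sum naturally runs over $\widehat{G}_\ell\cong M^\circ/(\ell+h^\vee)\overset{\circ}{Q}{}^\vee$ rather than $G_\ell$, which only matters notationally in the twisted case and is exactly the bookkeeping issue you already flagged.
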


In the case that $r\leq a_0,$ under the correspondence $\chi_\lambda\mapsto A_{\overline{\lambda}+\overline{\rho}},$ the matrices $T_{\mathcal{A}},S_{\mathcal{A}}$ gives a projective action of $\Gamma_1(1)=SL(2,\Z),$ arising \emph{naturally} from the internal symmetries of the extended double affine Weyl group, on the space spanned by the level $\ell$ characters of $\mathfrak{g}.$ This action can in fact be made into a linear representation by suitably renormalizing the matrices $T_{\mathcal{A}},S_{\mathcal{A}}.$ In \cite{KP}, the authors define a linear representation of $SL(2,\Z)$ by expressing the characters as quotients of linear combinations of theta functions. In particular, their matrices $S$ and $T$ differ from the matrices $S_{\mathcal{A}}$ and $T_{\mathcal{A}}$ by an overall factor of $i^{|\overset{\circ}{\Delta}_+|}$ for the former and by an overall factor of $e^{-\frac{\pi i|\overline{\rho}|^2}{h^\vee}}$ for the latter.

In the case that $r>a_0,$ one may identify the $\overset{\circ}{W}$-alternating subspace of $C(\widehat{G}_\ell,\C)$ with the span of level $\ell$ characters of the adjacent Lie algebra (see the comment after (\ref{varphi-identification-P-Pvee})). The matrix $S_{\mathcal{A}}$ is then interpreted as a map between the space spanned by level $\ell$ characters of $\mathfrak{g}$ and the space spanned by level $\ell$ characters of its adjacent algebra.

\medskip

Unfortunately, this story does not shed much light on the fusion product structure on characters. We will see that the linear independence of the Fourier transforms of the alternating functions is crucial in defining the fusion ring. We address this in the next section.

\medskip

\subsection{Fusion Rules and $\overset{\circ}{W}$-Invariant Functions}
\label{sec-fusion-rules-invariant}

Now we modify $\iota_1$ to yield a different isomorphism between $M^\circ/(\ell+h^\vee)\overset{\circ}{Q}{}^\vee$ and $\widehat{G}_\ell.$ Define the map
\begin{align}
	&\iota_3:M^\circ/(\ell+h^\vee)\overset{\circ}{Q}{}^\vee\to \widehat{G}_\ell\\
	&\iota_3(\alpha)=\left\{\begin{matrix}
	e^{\frac{2\pi i}{\ell+h^\vee}\langle \alpha+\overline{\rho},\cdot \rangle} &\text{if }r\leq a_0\\
	e^{\frac{2\pi i}{\ell+h^\vee}\langle \alpha+\overline{\rho}^\vee,\cdot \rangle} &\text{if }r>a_0
	\end{matrix}
	\right.
\end{align}
It is easy to see that the set of functions $\{\iota_3(\alpha)\}_{\alpha\in M^\circ/(\ell+h^\vee)\overset{\circ}{Q}{}^\vee}$ is equal to $\widehat{G}_\ell.$ 

\medskip

Note that $\iota_3(\alpha)\iota_3(\beta)\neq \iota_3(\alpha+\beta),$ i.e., unlike $\iota_1,$ the map $\iota_3$ is \emph{not }a group homomorphism. On the other hand, when $r\leq a_0,$ $\overset{\circ}{W}$ acts on $\widehat{G}_\ell$ by
\begin{align}
	(w\iota_3(\alpha))(e^\mu)&=e^{\frac{2\pi i}{\ell+h^\vee}\langle \alpha+\overline{\rho},w^{-1}(\mu)\rangle}\\
	&=e^{\frac{2\pi i}{\ell+h^\vee}\langle w(\alpha+\overline{\rho}),\mu\rangle}\\
	&=\iota_3(w\cdot \alpha)(e^\mu),
\end{align}
where $\cdot$ in the last line is the dot action on $G_\ell$ (see \textsection \ref{sec-ALA-prelims}). A similar action holds when $r>a_0,$ except with $\overline{\rho}^\vee$ playing the role of $\overline{\rho}.$ The preceding discussion proves the following.

\begin{proposition}
	The map $\iota_3:M^\circ/(\ell+h^\vee)\overset{\circ}{Q}{}^\vee\to \widehat{G}_\ell$ is a bijection that intertwines the dot action of $\overset{\circ}{W}$ on $M^\circ/(\ell+h^\vee)\overset{\circ}{Q}{}^\vee$ and the dual action (with respect to the standard action) of $\overset{\circ}{W}$ on $\widehat{G}_\ell.$
\end{proposition}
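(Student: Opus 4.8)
The plan is to present $\iota_3$ as a translate of the group isomorphism $\iota_1$, and then read off both assertions — the bijectivity and the intertwining — from that description together with the $\overset{\circ}{W}$-invariance of the form $\langle,\rangle$.

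First I would record that $\iota_3(\alpha)$ is the pointwise product $\iota_1(\alpha)\cdot\psi$ in $\widehat{G}_\ell$, where $\psi=e^{\frac{2\pi i}{\ell+h^\vee}\langle\overline{\rho},\cdot\rangle}$ when $r\le a_0$ and $\psi=e^{\frac{2\pi i}{\ell+h^\vee}\langle\overline{\rho}^\vee,\cdot\rangle}$ when $r>a_0$. For this to make sense one must check $\psi\in\widehat{G}_\ell=\widehat{\overset{\circ}{P}/(\ell+h^\vee)M}$, i.e.\ $\langle\overline{\rho},M\rangle\subseteq\Z$ (resp.\ $\langle\overline{\rho}^\vee,M\rangle\subseteq\Z$): the first holds because $M\subseteq\nu(\overset{\circ}{Q}{}^\vee)$ and $\langle\overline{\Lambda}_i,\alpha_j^\vee\rangle=\delta_{ij}$, the second because $M=\overset{\circ}{Q}$ and $\langle\overline{\Lambda}_i^\vee,\alpha_j\rangle=\delta_{ij}$ by \eqref{defn-fund-cofund-weights}. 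Since $\iota_1$ is a group isomorphism and multiplication by the fixed character $\psi$ is a bijection of $\widehat{G}_\ell$, the composite $\iota_3$ is a bijection; that it fails to be a homomorphism is just the nontriviality of $\psi$, as noted in the remark preceding the statement.

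For the intertwining I would argue exactly as in the display immediately before the statement, after first checking that both sides of the asserted equivariance are well-defined. The dual action of $w\in\overset{\circ}{W}$ on $\widehat{G}_\ell$, namely $(w\chi)(e^\mu)=\chi(e^{w^{-1}(\mu)})$, makes sense because $\overset{\circ}{W}$ acts on $G_\ell$ (it preserves $\overset{\circ}{P}$ and $(\ell+h^\vee)M$). The dot action $w\cdot\alpha=w(\alpha+\overline{\rho})-\overline{\rho}$ on $M^\circ/(\ell+h^\vee)\overset{\circ}{Q}{}^\vee$ (with $\overline{\rho}^\vee$ in place of $\overline{\rho}$ when $r>a_0$) is well-defined because $w(\alpha)\in M^\circ$ (as $\overset{\circ}{W}$ preserves $M$ and the form) while $w(\overline{\rho})-\overline{\rho}\in\overset{\circ}{Q}\subseteq\overset{\circ}{P}=M^\circ$ (resp.\ $w(\overline{\rho}^\vee)-\overline{\rho}^\vee\in\overset{\circ}{Q}{}^\vee\subseteq\overset{\circ}{P}{}^\vee=M^\circ$), so $w\cdot\alpha\in M^\circ$, and because $w\cdot\alpha-w\cdot\alpha'=w(\alpha-\alpha')\in(\ell+h^\vee)\overset{\circ}{Q}{}^\vee$ whenever $\alpha-\alpha'\in(\ell+h^\vee)\overset{\circ}{Q}{}^\vee$, since $\overset{\circ}{W}$ stabilizes $\overset{\circ}{Q}{}^\vee$. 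With these in hand the computation $(w\iota_3(\alpha))(e^\mu)=e^{\frac{2\pi i}{\ell+h^\vee}\langle\alpha+\overline{\rho},\,w^{-1}(\mu)\rangle}=e^{\frac{2\pi i}{\ell+h^\vee}\langle w(\alpha+\overline{\rho}),\,\mu\rangle}=e^{\frac{2\pi i}{\ell+h^\vee}\langle(w\cdot\alpha)+\overline{\rho},\,\mu\rangle}=\iota_3(w\cdot\alpha)(e^\mu)$ finishes the proof, the second equality being $\overset{\circ}{W}$-invariance of $\langle,\rangle$ and the third the definition of the dot action.

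There is no genuine obstacle: the entire content is that the ``$+\overline{\rho}$'' built into $\iota_3$ is exactly the shift converting the standard linear action of $\overset{\circ}{W}$ into the dot action, so the proposition reduces to a one-line computation once the translate-of-$\iota_1$ description and the well-definedness of the two quotient actions are established. If I had to name the fussiest point, it is keeping the $r\le a_0$ and $r>a_0$ cases in parallel — in particular making sure that in the twisted case the shift vector is $\overline{\rho}^\vee$ (living in $M^\circ=\overset{\circ}{P}{}^\vee$) rather than $\overline{\rho}$, and that the corresponding $\psi$ is still a well-defined character of $G_\ell$.
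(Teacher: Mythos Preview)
Your proposal is correct and follows essentially the same approach as the paper: the paper's proof is the short discussion immediately preceding the proposition, which asserts bijectivity (``It is easy to see that the set of functions $\{\iota_3(\alpha)\}$ is equal to $\widehat{G}_\ell$'') and then carries out exactly the three-line computation $(w\iota_3(\alpha))(e^\mu)=\iota_3(w\cdot\alpha)(e^\mu)$ that you reproduce. Your translate-of-$\iota_1$ description and well-definedness checks (that $\psi\in\widehat{G}_\ell$ and that the dot action descends to the quotient) simply make explicit what the paper leaves implicit.
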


\medskip

Since $G_{\ell}$ is abelian, we can write its character table as the $g\times g$ matrix
\begin{align}
\label{Psi-character-table-G-ell}
\Psi=\begin{pmatrix}
\iota_3(\mu_1)(e^{\lambda_1}) & \iota_3(\mu_2)(e^{\lambda_1}) & \cdots & \iota_3(\mu_g)(e^{\lambda_1})\\
\iota_3(\mu_1)(e^{\lambda_2}) & \iota_3(\mu_2)(e^{\lambda_2}) & \cdots & \iota_3(\mu_g)(e^{\lambda_2})\\
\vdots & & \ddots & \vdots\\
\iota_3(\mu_1)(e^{\lambda_g}) & \iota_3(\mu_2)(e^{\lambda_g}) & \cdots & \iota_3(\mu_g)(e^{\lambda_g})
\end{pmatrix}
\end{align}
where $g=|G_\ell|,$ $\{\lambda_1,\ldots,\lambda_g\}\subset \overset{\circ}{P}$ is a complete set of coset representatives of $\overset{\circ}{P}/(\ell+h^\vee)M,$ and $\{\mu_1,\ldots,\mu_g\}\subset M^\circ$ is a complete set of coset representatives of $M^\circ/(\ell+h^\vee)\overset{\circ}{Q}{}^\vee.$ For later use, assume that $\{\lambda_1,\ldots,\lambda_p\}$ and $\{\mu_1,\ldots,\mu_p\}$ correspond to the sets of dominant integral weights of level $\ell.$ Since $\iota_3(\mu_j)$ is a function on $G_\ell,$ we identify $\iota_3(\mu_j)$ with the element in $\C[G_\ell]$ defined by
\begin{equation}
	\psi(\mu_j)=\sum_{k=1}^{g}\iota_3(\mu_j)(e^{\lambda_k})e^{\lambda_k}=\sum_{k=1}^{g}e^{\frac{2\pi i}{\ell+h^\vee}\langle \mu_j+\overline{\rho},\lambda_k\rangle}e^{\lambda_k},
\end{equation}
then its Fourier transform satisfies 
\begin{equation}
	\widehat{\psi(\mu_j)}(\iota_3(\mu_i))=g^{1/2}\delta_{ij}.
\end{equation}
It follows that $g^{-1/2}\widehat{\psi(\mu_j)}$ are orthogonal idempotents that span $C(\widehat{G}_\ell,\C)$ (under point-wise multiplication). For any $f\in \C[G_\ell],$ we will express its Fourier transform as an element of $\C[\widehat{G}_\ell]\cong C(\widehat{G}_\ell,\C)$ in the form
\begin{equation}
	\widehat{f}=\sum_{j=1}^{g}\widehat{f}(\iota_3(\mu_j))g^{-1/2}\widehat{\psi(\mu_j)}.
\end{equation}
We will never have occasion to use the convolution product of $\C[\widehat{G}_\ell],$ instead we will use the point-wise multiplication from $C(\widehat{G}_\ell,\C),$ i.e., for $f,g\in \C[\widehat{G}_\ell],$
\begin{equation}
	\widehat{f}\cdot \widehat{g}=\sum_{j=1}^{g}\widehat{f}(\iota_3(\mu_j))\widehat{g}(\iota_3(\mu_j))g^{-1/2}\widehat{\psi(\mu_j)}.
\end{equation}

\medskip

Suppose that $f\in \C[G_\ell]$ is $\overset{\circ}{W}$-invariant (with respect to the standard action) and $r\leq a_0,$ then its Fourier transform satisfies
\begin{align}
	\widehat{f}(\iota_3(\mu_j))&=g^{-1/2}\sum_{k=1}^{g}f(\lambda_k)e^{-\frac{2\pi i}{\ell+h^\vee}\langle \lambda_k,\mu_j+\overline{\rho}\rangle}\\
	&=g^{-1/2}\sum_{k=1}^{g}f(\lambda_k)e^{-\frac{2\pi i}{\ell+h^\vee}\langle \lambda_k,w(\mu_j+\overline{\rho})\rangle}\\
	&=\widehat{f}(\iota_3(w\cdot \mu_j)),
\end{align}
for all $w\in \overset{\circ}{W}.$ The analogous statement holds for $r>a_0.$ It follows that $\widehat{f}$ is constant on the $\overset{\circ}{W}$-orbits of $M^\circ/(\ell+h^\vee)\overset{\circ}{Q}{}^\vee$ (or, rather, the image under $\iota_3$) under the dot action. 

\medskip

Let $\overset{\circ}{\chi}_{\lambda}$ be the character of the irreducible finite dimensional representation $V_{\overline{\lambda}}$ of $\overset{\circ}{\mathfrak{g}}$ with highest weight $\overline{\lambda}.$ We call these $\overset{\circ}{\mathfrak{g}}$-\emph{characters} to distinguish them from linear characters of $G_\ell.$ Suppose that $\lambda\in P^+,$ then consider the element
\begin{equation}
	\overset{\circ}{\chi}_\lambda =\overset{\circ}{\chi}_{\overline{\lambda}} = \sum_{\sigma\in \overset{\circ}{P}} m_{\overline{\lambda}}(\sigma) e^{\sigma}\in \C[\overset{\circ}{P}],
\end{equation}
where $m_{\overline{\lambda}}(\sigma)$ is the multiplicity of the weight space of $\sigma$ in $V_{\overline{\lambda}}.$ When $\lambda$ has level $\ell,$ identify $\overset{\circ}{\chi}_\lambda$ with its canonical image in $\C[G_\ell].$ Note that $\overset{\circ}{\chi}_{\ell\overline{\Lambda}_0}=\overset{\circ}{\chi}_{0}$ is identity in $\C[G_\ell]$ and $\overset{\circ}{\chi}_\lambda\in \C[G_\ell]$ is invariant under the standard action of $\overset{\circ}{W}$ in $\C[G_\ell]$ for all $\lambda\in P_\ell^+.$

\medskip

When $r\leq a_0,$ the Fourier transform of these $\overset{\circ}{\mathfrak{g}}$-characters is given by
\begin{align}
	\widehat{\overset{\circ}{\chi}}_\lambda(\iota_3(\mu_j))&=g^{-1/2}\sum_{k=1}^{g} m_{\overline{\lambda}}(\lambda_k)e^{-\frac{2\pi i}{\ell+h^\vee}\langle \lambda_k,\mu_j+\overline{\rho}\rangle}\\
	&=g^{-1/2}\frac{\sum_{w\in \overset{\circ}{W}} \varepsilon(w)e^{-\frac{2\pi i}{\ell+h^\vee}\langle w(\overline{\lambda}+\overline{\rho}),\mu_j+\overline{\rho}\rangle}  }{\sum_{w\in \overset{\circ}{W}} \varepsilon(w)e^{-\frac{2\pi i}{\ell+h^\vee}\langle w(\overline{\rho}),\mu_j+\overline{\rho}\rangle} }
\end{align}
where $\mu_j\in M^\circ/(\ell+h^\vee)\overset{\circ}{Q}{}^\vee$ (where the second equality is valid when $j=1,\ldots,p$). When $r>a_0,$ we have the same formula except that $\mu_j+\overline{\rho}$ is replaced by $\mu_j+\overline{\rho}^\vee.$ We write $\overset{\circ}{\chi}_\lambda(\hat{\mu}_j)$ instead of $\widehat{\overset{\circ}{\chi}}_\lambda(\iota_3(\mu_j)).$ The $\overset{\circ}{W}$-invariance of $\overset{\circ}{\chi}_\lambda$ implies that the $\overset{\circ}{\mathfrak{g}}$-character is constant (when evaluated) on the dot action $\overset{\circ}{W}$-orbits of $M^\circ/(\ell+h^\vee)\overset{\circ}{Q}{}^\vee.$ 

\medskip

\begin{remark}
\label{remark-linear-ind-Lie-characters}
	It is crucial to note that the denominator and numerator vanish whenever $\mu_j+\overline{\rho}$ (or $\mu_j+\overline{\rho}^\vee$) is not a regular dominant weight. In fact, whenever the denominator vanishes, so does the numerator. Nonetheless, by the results of section \textsection\ref{sec-mod-inv-alternating-space}, the level $\ell$ $\overset{\circ}{\mathfrak{g}}$-characters are linearly independent as functions over the $\overset{\circ}{W}$-orbits of elements $\mu_j+\overline{\rho},$ where $\mu_j\in M^\circ$ is the projection of a dominant element of level $\ell.$ In particular, $A_{\overline{\rho}}$ does not vanish at any of these points.
\end{remark}

\medskip

The Fourier transform of $\overset{\circ}{\chi}_\lambda$ can be written
\begin{equation}
	\widehat{\overset{\circ}{\chi}}_\lambda=\sum_{j=1}^{g}\overset{\circ}{\chi}_\lambda(\hat{\mu}_j)g^{-1/2}\widehat{\psi(\mu_j)}.
\end{equation}
Recall that $\{\mu_1,\ldots,\mu_p\}$ corresponds to the set dominant integral weights of level $\ell$ in $M^\circ.$ Since the $\overset{\circ}{\mathfrak{g}}$-characters are invariant under the standard action of $\overset{\circ}{W},$ we can write
\begin{equation}
	\widehat{\overset{\circ}{\chi}}_\lambda=\sum_{\mu}\overset{\circ}{\chi}_\lambda(\hat{\mu})\phi^{\mu}
\end{equation}
where the sum is taken over representatives of the orbits of the dot action of $\overset{\circ}{W}$ on $M^\circ/(\ell+h^\vee)\overset{\circ}{Q}{}^\vee$ and 
\begin{equation}
	\phi^{\mu}=\sum_{\nu\in \overset{\circ}{W}\cdot \mu}g^{-1/2}\widehat{\psi(\nu)}.
\end{equation}
Note that we can take the representatives to be $\{\nu_1,\ldots,\nu_p,\nu_{p+1},\ldots\}$ where $\nu_j=\mu_j$ for $j=1,\ldots,p$ and $\{\nu_j\}_{j\geq 1}\subseteq \{\mu_j\}_{j=1}^{g}.$ Note that $\phi^{\nu_j}$ are also orthogonal idempotents in $\C[\widehat{G}_\ell]$ and so their inverse Fourier transforms $(\phi^{\nu_j})^\vee$ are orthogonal idempotents in $\C[G_\ell].$ 

\medskip

We now construct an ideal in $\C[G_\ell]$ that is isomorphic as an algebra with the associated fusion ring of $\mathfrak{g},$ i.e., with structure constants given by the fusion rules (\ref{verlinde-formula}). Denote by $\Delta_\ell$ the function on $G_\ell$ given by
\begin{equation}
	\Delta_\ell=(\phi^{\mu_1}+\phi^{\mu_2}+\cdots + \phi^{\mu_p})^\vee,
\end{equation}
which we treat as an element of $\C[G_\ell]$ as usual. It is worth making the point that $\widehat{\Delta}_\ell$ is 1 on $\nu_1,\ldots,\nu_p$ and 0 on $\nu_{p+1},\ldots.$ 

\medskip

\begin{remark}
	The definition of $\Delta_\ell$ is motivated by the fact that $A_{\overline{\rho}}$ acts as zero on representations with character $\nu_{p+1},\ldots,$ and so, alternately, we could define $\Delta_\ell$ as the unique element of $\C[G_\ell]$ that acts as identity on all representations of $G_\ell$ on which $A_{\overline{\rho}}$ acts nontrivially and which acts as zero on all representations of $G_\ell$ on which $A_{\overline{\rho}}$ acts as zero.
\end{remark}

\medskip

Let $\lambda,\mu\in P_\ell^+,$ then
\begin{equation}
	\widehat{\overset{\circ}{\chi}_\lambda \overset{\circ}{\chi}_\mu}=\widehat{\overset{\circ}{\chi}}_\lambda\widehat{\overset{\circ}{\chi}}_\mu=\sum_{j= 1}^{p}\overset{\circ}{\chi}_\lambda(\hat{\nu_j})\overset{\circ}{\chi}_\mu(\hat{\nu_j})\phi^{\nu_j}.
\end{equation}
Since the $\overset{\circ}{\mathfrak{g}}$-characters are linearly independent as functions on the $\overset{\circ}{W}$ orbits (with respect to the dot action) of dominant weights $\nu_1,\ldots,\nu_p$ (using the $\iota_3$ map, also see prop. \ref{prop-alternating-linear-independence} and remark \ref{remark-linear-ind-Lie-characters}), it follows that there are unique linear combinations of $\overset{\circ}{\mathfrak{g}}$-characters such that, for $\nu\in \{\nu_j\}_{j=1}^{p}$
\begin{equation}
	\phi^{\nu}+\sum_{l\geq p+1}b'_{\nu\nu_l}\phi^{\nu_l}=\sum_{\lambda\in P_\ell^+}b_{\nu\lambda}\widehat{\overset{\circ}{\chi}}_\lambda,
\end{equation}
where $b_{\nu\lambda},b'_{\nu\nu_l}\in \C.$ Clearly, the $b_{\nu\lambda}$ are entries of the inverse matrix of $\chi=(\overset{\circ}{\chi}_\lambda(\hat{\mu}))_{\mu,\lambda\in P_\ell^+}$ when $r\leq a_0$ or $\chi=(\overset{\circ}{\chi}_\lambda(\hat{\mu}))_{\mu\in P_\ell^{\vee+},\lambda\in P_\ell^+}$ when $r>a_0.$ Note that $\chi_{\mu\lambda}A_{\overline{\rho}}(\iota_3(\overline{\mu}_j))=A_{\lambda\mu}.$ It follows that
\begin{align}
	\widehat{\overset{\circ}{\chi}_\lambda \overset{\circ}{\chi}_\mu}&=\widehat{\overset{\circ}{\chi}}_\lambda\widehat{\overset{\circ}{\chi}}_\mu=\sum_{\nu\in P_\ell^+}c_{\lambda\mu}^{\nu}\widehat{\overset{\circ}{\chi}}_\nu+\sum_{j\geq p+1}d_{\lambda\mu}^{j}\phi^{\nu_j}\\
	\widehat{\Delta}_\ell\widehat{\overset{\circ}{\chi}_\lambda \overset{\circ}{\chi}_\mu}&=\widehat{\Delta_\ell \overset{\circ}{\chi}}_\lambda\widehat{\Delta_\ell \overset{\circ}{\chi}}_\mu=\sum_{\nu\in P_\ell^+}c_{\lambda\mu}^{\nu}\widehat{\Delta_\ell\overset{\circ}{\chi}}_\nu.
\end{align}
In order to determine the coefficients $c_{\lambda\mu}^\nu,$ for $\lambda,\mu,\nu\in P_\ell^+,$ we only need to compute 
\begin{equation}
	c_{\lambda\mu}^{\nu}=\sum_{\gamma} \frac{\chi_{\lambda\gamma}\chi_{\mu\gamma}(\chi^{-1})_{\gamma\nu}}{\chi_{0 \gamma }}.
\end{equation}
where $\gamma\in P_\ell^+$ if $r\leq a_0$ and $\gamma\in P_\ell^{\vee+}$ if $r> a_0.$ By (\ref{verlinde-formula}) and the surrounding discussion, it follows that $c_{\lambda\mu}^{\nu}=N_{\lambda\mu}^{\nu}.$ The above discussion motivates the following definition and proves the following theorem.

\medskip

\begin{definition}
\label{defn-fusion-principal-ideal}
	The \emph{fusion principal ideal} of $\C[G_\ell]$ is the principal ideal $\mathcal{F}_\ell = \C[G_\ell]\Delta_\ell.$
\end{definition}

\medskip

\begin{theorem}
\label{theorem-fusion-ideal-structure-constants}
	The fusion principal ideal $\mathcal{F}_\ell$ has linear basis $\Delta_\ell\overset{\circ}{\chi}_{\lambda},$ for $\lambda\in P_\ell^+,$ and the corresponding structure constants are the fusion rules $N_{\lambda\mu}^{\nu}.$
\end{theorem}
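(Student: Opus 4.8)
The plan is to assemble the statement from the computations already carried out just above the theorem, the only genuine work being to justify the two claims: that $\{\Delta_\ell\overset{\circ}{\chi}_\lambda\}_{\lambda\in P_\ell^+}$ is a \emph{basis} of $\mathcal{F}_\ell$ (not merely a spanning set), and that the structure constants in this basis are exactly the $N_{\lambda\mu}^\nu$. First I would pass everything through the Fourier transform, which by the propositions above is an algebra isomorphism from $\C[G_\ell]$ (convolution) onto $C(\widehat{G}_\ell,\C)$ (pointwise), so it suffices to work on the Fourier side. There, $\widehat{\Delta}_\ell$ is the indicator of the points $\iota_3(\nu_1),\dots,\iota_3(\nu_p)$, hence $\widehat{\mathcal{F}}_\ell$ is precisely the subalgebra of functions supported on those $p$ points, which has dimension $p=|P_\ell^+|$ and pointwise-multiplicative idempotent basis $\{\phi^{\nu_j}\}_{j=1}^p$. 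So the target algebra is explicitly $\C^p$ with coordinatewise product.

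Next I would show $\{\widehat{\Delta_\ell\overset{\circ}{\chi}_\lambda}\}_{\lambda\in P_\ell^+}=\{\widehat{\Delta}_\ell\,\widehat{\overset{\circ}{\chi}}_\lambda\}$ is a basis of this $p$-dimensional space. By the displayed expansion $\widehat{\Delta_\ell\overset{\circ}{\chi}}_\lambda=\sum_{j=1}^p \overset{\circ}{\chi}_\lambda(\hat\nu_j)\phi^{\nu_j}$, the change-of-basis matrix from $\{\phi^{\nu_j}\}$ to $\{\widehat{\Delta_\ell\overset{\circ}{\chi}}_\lambda\}$ is exactly $\chi=(\overset{\circ}{\chi}_\lambda(\hat\mu))_{\mu,\lambda}$ (indexed by $P_\ell^+$, resp.\ $P_\ell^{\vee+}$ when $r>a_0$, identified via $\varphi$ using Proposition \ref{prop-alternating-linear-independence}). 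Invertibility of $\chi$ is precisely the linear independence of the level-$\ell$ $\overset{\circ}{\mathfrak{g}}$-characters as functions on the dot-action orbits of the dominant $\nu_j$, which is Remark \ref{remark-linear-ind-Lie-characters} together with Proposition \ref{prop-alternating-linear-independence}: writing $\overset{\circ}{\chi}_\lambda(\hat\mu)=A_{\overline\lambda+\overline\rho}(\iota_3(\overline\mu))/A_{\overline\rho}(\iota_3(\overline\mu))$ and using that $A_{\overline\rho}$ is nonvanishing at all these points, invertibility of $\chi$ reduces to the already-established linear independence of the alternants $\widehat{A}_{\overline\lambda+\overline\rho}$. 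Hence the $p$ elements $\Delta_\ell\overset{\circ}{\chi}_\lambda$ are linearly independent, and since they all lie in $\mathcal{F}_\ell$ and $\dim\mathcal{F}_\ell=\dim\widehat{\mathcal{F}}_\ell=p$, they form a basis.

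Then the structure constants fall out of the second displayed identity above the theorem, $\widehat{\Delta}_\ell\widehat{\overset{\circ}{\chi}_\lambda\overset{\circ}{\chi}_\mu}=\sum_{\nu\in P_\ell^+}c_{\lambda\mu}^\nu\,\widehat{\Delta_\ell\overset{\circ}{\chi}}_\nu$: applying the (inverse) Fourier transform and using that $\Delta_\ell$ is idempotent and central, $(\Delta_\ell\overset{\circ}{\chi}_\lambda)(\Delta_\ell\overset{\circ}{\chi}_\mu)=\Delta_\ell\overset{\circ}{\chi}_\lambda\overset{\circ}{\chi}_\mu=\sum_\nu c_{\lambda\mu}^\nu\,\Delta_\ell\overset{\circ}{\chi}_\nu$ in $\mathcal{F}_\ell$, so $c_{\lambda\mu}^\nu$ are the structure constants in the claimed basis. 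Finally, the explicit evaluation $c_{\lambda\mu}^\nu=\sum_\gamma \chi_{\lambda\gamma}\chi_{\mu\gamma}(\chi^{-1})_{\gamma\nu}/\chi_{0\gamma}$ matches the Verlinde formula \eqref{verlinde-formula} once one checks that the matrix $\chi$ agrees, up to an invertible diagonal rescaling independent of the row/column index within $P_\ell^+$, with the $S$-matrix whose proportionality to $\sum_{w}\varepsilon(w)e^{-\frac{2\pi i}{\ell+h^\vee}\langle w(\overline\lambda+\overline\rho),\overline\mu+\overline\rho\rangle}$ is recalled in \textsection2.5; since the Verlinde formula is invariant under $S\mapsto SD$ for diagonal invertible $D$, this gives $c_{\lambda\mu}^\nu=N_{\lambda\mu}^\nu$.

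I expect the main obstacle to be bookkeeping rather than a deep difficulty: carefully tracking the $\varphi$-identification between $P_\ell^+$ and $P_\ell^{\vee+}$ in the $r>a_0$ case so that the matrix $\chi$ really is square and the relation $\chi_{\mu\lambda}A_{\overline\rho}(\iota_3(\overline\mu_j))=A_{\lambda\mu}$ is applied with the correct $\overline\rho$ versus $\overline\rho^\vee$, and confirming that the diagonal discrepancy between $\chi$ and the standard $S$-matrix (coming from the $A_{\overline\rho}$ denominators and the overall normalization noted after Proposition \ref{prop-S-symmetric-unitary}) is indeed row/column-diagonal so that the Verlinde formula's $SD$-invariance applies cleanly. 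All of this is already implicit in the paragraph preceding the theorem, so the proof is essentially a matter of organizing these observations.
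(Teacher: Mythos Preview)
Your proposal is correct and follows essentially the same route as the paper: the paper's proof is precisely the discussion preceding the theorem statement (``The above discussion motivates the following definition and proves the following theorem''), which uses the Fourier transform, the linear independence from Proposition~\ref{prop-alternating-linear-independence} and Remark~\ref{remark-linear-ind-Lie-characters}, the explicit formula $c_{\lambda\mu}^{\nu}=\sum_\gamma \chi_{\lambda\gamma}\chi_{\mu\gamma}(\chi^{-1})_{\gamma\nu}/\chi_{0\gamma}$, and the $S\mapsto SD$ invariance of the Verlinde formula. You are in fact slightly more explicit than the paper in verifying that $\dim\mathcal{F}_\ell=p$ and hence that the $\Delta_\ell\overset{\circ}{\chi}_\lambda$ form a basis rather than merely a spanning set, which the paper leaves implicit.
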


The following result is well-known, see \cite{Bea}, \cite{Ku1}, etc., but the construction of $\mathcal{F}_\ell$ and the preceding theorem provides a quick direct proof.

\begin{corollary}
\label{cor-homomorphism-rep-ring}
	Let $Ch_\ell(\mathfrak{g})$ be the fusion ring attached to $\mathfrak{g}$ at level $\ell.$ The map $F:Ch(\overset{\circ}{\mathfrak{g}})\to Ch_\ell(\mathfrak{g})$ defined
	\begin{equation}
		F(\overset{\circ}{\chi}_{\lambda})=\left\{\begin{matrix}
		\varepsilon(w_\lambda)\chi_{w_\lambda\cdot \lambda} & \text{if for some }w\in W^{aff}_{\ell+h^\vee},w\cdot \lambda\in A_{\ell+h^\vee}\\
		0&\text{otherwise}
		\end{matrix}		
		\right.
	\end{equation}
	where $w_\lambda\in W^{aff}_{\ell+h^\vee}$ is the unique element such that $w_\lambda\cdot \lambda$ lies in the fundamental alcove $A_{\ell+h^\vee},$ is a homomorphism. In particular, the fusion rules are integers.
\end{corollary}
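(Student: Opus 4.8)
The plan is to deduce the corollary almost immediately from Theorem \ref{theorem-fusion-ideal-structure-constants} together with the fact that $\mathcal{F}_\ell$ is an ideal in $\C[G_\ell]$ and that $\overset{\circ}{\chi}_\lambda\mapsto \overset{\circ}{\chi}_\lambda\in\C[G_\ell]$ (the canonical image) is a ring homomorphism. First I would consider the composite
\[
Ch(\overset{\circ}{\mathfrak{g}})\xrightarrow{\ \pi\ }\C[G_\ell]\xrightarrow{\ \cdot\Delta_\ell\ }\mathcal{F}_\ell,
\]
where $\pi$ sends $\overset{\circ}{\chi}_\lambda$ to its canonical projection in $\C[G_\ell]=\overset{\circ}{P}/(\ell+h^\vee)M$ and the second map is multiplication by the idempotent $\Delta_\ell$. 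The first map is a ring homomorphism because $Ch(\overset{\circ}{\mathfrak{g}})$ is a subalgebra of $\C[\overset{\circ}{P}]$ and $\C[G_\ell]$ is a quotient of $\C[\overset{\circ}{P}]$; the second is a ring homomorphism onto $\mathcal{F}_\ell$ because $\Delta_\ell$ is idempotent, so $(x\Delta_\ell)(y\Delta_\ell)=(xy)\Delta_\ell$. By Theorem \ref{theorem-fusion-ideal-structure-constants}, the elements $\Delta_\ell\overset{\circ}{\chi}_\lambda$ for $\lambda\in P_\ell^+$ form a basis of $\mathcal{F}_\ell$ with structure constants $N_{\lambda\mu}^\nu$, so $\mathcal{F}_\ell\cong Ch_\ell(\mathfrak{g})$ as rings, and the composite above is the desired homomorphism $Ch(\overset{\circ}{\mathfrak{g}})\to Ch_\ell(\mathfrak{g})$.

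The remaining work is to identify this composite explicitly with the map $F$ defined via the Kac-Walton alcove reflection, i.e., to show that for $\lambda\in P^+$ the class of $\overset{\circ}{\chi}_\lambda$ in $\C[G_\ell]$, after multiplication by $\Delta_\ell$, equals $\varepsilon(w_\lambda)\Delta_\ell\overset{\circ}{\chi}_{w_\lambda\cdot\lambda}$ when $\lambda+\rho$ is $W^{aff}_{\ell+h^\vee}$-regular and equals $0$ otherwise. I would do this by passing to Fourier transforms, where everything is concrete: by the computation preceding Definition \ref{defn-fusion-principal-ideal}, $\widehat{\overset{\circ}{\chi}}_\lambda$ evaluated at $\iota_3(\mu_j)$ is the ratio of alternating sums
\[
\frac{\sum_{w\in\overset{\circ}{W}}\varepsilon(w)e^{-\frac{2\pi i}{\ell+h^\vee}\langle w(\overline{\lambda}+\overline{\rho}),\mu_j+\overline{\rho}\rangle}}{\sum_{w\in\overset{\circ}{W}}\varepsilon(w)e^{-\frac{2\pi i}{\ell+h^\vee}\langle w(\overline{\rho}),\mu_j+\overline{\rho}\rangle}},
\]
and the numerator only depends on $\overline{\lambda}+\overline{\rho}$ through the action of the affine Weyl group $\overset{\circ}{W}\ltimes(\ell+h^\vee)M$ (equivalently, on $\lambda+\rho$ through $W^{aff}_{\ell+h^\vee}$): reflecting $\overline{\lambda}+\overline{\rho}$ into the fundamental alcove $A_{\ell+h^\vee}$ by the unique $w_\lambda$ changes the numerator by the sign $\varepsilon(w_\lambda)$, and if $\overline{\lambda}+\overline{\rho}$ lies on a wall (is not regular) the numerator vanishes identically. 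Since $\widehat{\Delta}_\ell\widehat{\overset{\circ}{\chi}}_\lambda$ is supported exactly on $\{\iota_3(\nu_1),\dots,\iota_3(\nu_p)\}$ (the regular dominant $\mu_j$'s), this shows $\widehat{\Delta_\ell\overset{\circ}{\chi}}_\lambda=\varepsilon(w_\lambda)\widehat{\Delta_\ell\overset{\circ}{\chi}}_{w_\lambda\cdot\lambda}$ or $0$, hence the same identity holds before taking Fourier transforms. Finally, integrality of the fusion rules follows since $F$ is a surjection of rings and $Ch(\overset{\circ}{\mathfrak{g}})$ has a $\Z$-basis of $\overset{\circ}{\mathfrak{g}}$-characters with nonnegative integer structure constants (tensor product multiplicities); the images of the basis elements $\overset{\circ}{\chi}_\lambda$, $\lambda\in P_\ell^+$, form a $\Z$-spanning set of $Ch_\ell(\mathfrak{g})$ in which the $N_{\lambda\mu}^\nu$ are computed, so they lie in $\Z$.

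The main obstacle I expect is the careful bookkeeping in the middle step: matching the sign $\varepsilon(w_\lambda)$ coming from the length of the alcove-reflection element in $W^{aff}_{\ell+h^\vee}$ with the sign produced when the alternating numerator is re-expressed after that reflection, and confirming that the ``$w\cdot\lambda$'' appearing in $F$ (dot action) corresponds correctly to the shift-by-$\overline{\rho}$ convention used throughout in $\iota_3$ and in the passage between $P_\ell^+$ and $P_{\ell+h^\vee}^{++}$. In the twisted case $r>a_0$ one must additionally track the substitution of $\overline{\rho}^\vee$ for $\overline{\rho}$ and the identification of the affine Weyl group with that of the adjacent algebra, but no new idea is required there — it is the same argument applied on the dual side via $\varphi$.
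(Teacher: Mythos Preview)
Your proposal is correct and follows essentially the same route as the paper: both factor the map through $Ch(\overset{\circ}{\mathfrak{g}})\xrightarrow{\pi}\C[G_\ell]\xrightarrow{\cdot\Delta_\ell}\mathcal{F}_\ell$, invoke Theorem~\ref{theorem-fusion-ideal-structure-constants} to identify $\mathcal{F}_\ell\cong Ch_\ell(\mathfrak{g})$, and then verify the Kac--Walton formula by exploiting the affine Weyl (anti-)symmetry of the numerator in the Weyl character formula. The only cosmetic difference is that the paper first proves $(\pi(\overset{\circ}{\chi}_\lambda)-\varepsilon(w_\lambda)\overset{\circ}{\chi}_{w_\lambda\cdot\lambda})A_{\overline{\rho}}=0$ in $\C[G_\ell]$ using the alternant identity $\overset{\circ}{\chi}_\lambda A_{\overline{\rho}}=A_{\overline{\lambda}+\overline{\rho}}$ and then Fourier-transforms, whereas you pass to the Fourier side immediately and read off the same identity from the ratio expression; these are two phrasings of the same computation.
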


\begin{proof}
	Denote by $\pi:\C[\overset{\circ}{P}]\to \C[G_\ell]$ the canonical projection map. Recall that $Ch(\overset{\circ}{\mathfrak{g}})=\C[\overset{\circ}{P}]^{\overset{\circ}{W}}$ and consider the restriction map $\pi|_{Ch(\overset{\circ}{\mathfrak{g}})}.$ The map $\C[G_\ell]\to\mathcal{F}_\ell$ given by $f\mapsto f\Delta_\ell$ is an algebra homomorphism, therefore composing the two maps gives a homomorphism $Ch(\overset{\circ}{\mathfrak{g}})\to \mathcal{F}_\ell.$ The previous theorem shows that $Ch_\ell(\mathfrak{g})\cong \mathcal{F}_\ell.$ 
	
	In $\C[\overset{\circ}{P}],$ consider the elements
	\begin{align}
		A_{\overline{\lambda}}&=\sum_{w\in \overset{\circ}{W}}\varepsilon(w)e^{w(\overline{\lambda})},
	\end{align}
	such that $\lambda\in P,$ where we use the same notation as for their projection in $\C[G_\ell].$ (This is justified since $e^{w(\overline{\lambda}+\overline{\rho})}=e^{w'(\overline{\lambda}+\overline{\rho})}$ if and only if $\overline{\lambda}+\overline{\rho}-w^{-1}w'(\overline{\lambda}+\overline{\rho})\in (\ell+h^\vee)M$ which is equivalent to $w=w'.$) Let $\overset{\circ}{\chi}_{\overline{\lambda}}\in Ch(\overset{\circ}{\mathfrak{g}}),$ then $\overset{\circ}{\chi}_{\overline{\lambda}}A_{\overline{\rho}}=A_{\overline{\lambda}+\overline{\rho}}.$ Suppose that $w_\lambda\in W^{aff}_{\ell+h^\vee}$ such that $w_\lambda(\overline{\lambda}+\overline{\rho})\in cl(A_{\ell+h^\vee})$ (see \ref{fund-alcove}), then the projection of $A_{\overline{\lambda}+\overline{\rho}}\in \C[\overset{\circ}{P}]$ is precisely  $\varepsilon(w_{\lambda})A_{w_\lambda(\overline{\lambda}+\overline{\rho})}\in \C[G_\ell].$ It follows that 
	\begin{align}
		\pi(\overset{\circ}{\chi}_{\lambda}A_{\overline{\rho}})=\pi(\overset{\circ}{\chi}_{\lambda})A_{\overline{\rho}}=\varepsilon(w_{\lambda})\overset{\circ}{\chi}_{w_{\lambda}\cdot \lambda}A_{\overline{\rho}}=\varepsilon(w_{\lambda})A_{w_\lambda(\lambda+\overline{\rho})}\in \C[G_\ell].
	\end{align} 
	(Note that the projection is 0 whenever $w_\lambda(\overline{\lambda}+\overline{\rho})\notin A_{\ell+h^\vee}.$) Hence 
	$$(\pi(\overset{\circ}{\chi}_{\lambda})-\varepsilon(w_{\lambda})\overset{\circ}{\chi}_{w_{\lambda}\cdot \lambda})A_{\overline{\rho}}=0.$$
	After taking the Fourier transform, this implies that
	$$\widehat{\pi(\overset{\circ}{\chi}_{\lambda})}(\iota_3(\mu_j))-\varepsilon(w_{\lambda})\widehat{\overset{\circ}{\chi}}_{w_{\lambda}\cdot \lambda}(\iota_3(\mu_j))=0$$ 
	for $j=1,\ldots,p$ (recall that $\widehat{\overset{\circ}{\chi}}_{w_{\lambda}\cdot \lambda}(\iota_3(\mu_j))$ is the evaluation of the character $\overset{\circ}{\chi}_{\lambda}$ at $e^{\frac{2\pi i}{\ell+h^\vee}\langle \mu_j+\overline{\rho},\cdot \rangle}$ or $e^{\frac{2\pi i}{\ell+h^\vee}\langle \mu_j+\overline{\rho}^\vee,\cdot \rangle},$ depending on the case, also see remark \ref{remark-linear-ind-Lie-characters} related to the vanishing $A_{\overline{\rho}}$). It follows that $$\pi(\overset{\circ}{\chi}_{\lambda})\Delta_{\ell}=\varepsilon(w_{\lambda})\overset{\circ}{\chi}_{w_{\lambda}\cdot \lambda}\Delta_{\ell}$$
	since their Fourier transforms are equal, as desired.
	
	For the last statement, note that the product of two $\overset{\circ}{\mathfrak{g}}$-characters in $\C[\overset{\circ}{P}]$ is the $\overset{\circ}{\mathfrak{g}}$-character of the corresponding tensor product, which decomposes as an integral linear combination of $\overset{\circ}{\mathfrak{g}}$-characters.
\end{proof}

\medskip

For convenience, let us identify $\overset{\circ}{\chi}_{\overline{\lambda}}$ with $\Delta_\ell \overset{\circ}{\chi}_{\overline{\lambda}}$ in $\mathcal{F}_\ell.$ The fusion principal ideal $\mathcal{F}_\ell$ admits the bilinear form
\begin{align}
	\langle f,g\rangle_0=\sum_{\mu} \widehat{fg}(\iota_3(\overline{\mu}))=\sum_{\mu} \widehat{f}(\iota_3(\overline{\mu}))\widehat{g}(\iota_3(\overline{\mu}))
\end{align}
where $f,g\in \mathcal{F}_\ell$ and $\mu$ ranges over $P_\ell^{+}$ when $r\leq a_0$ and $P_\ell^{\vee+}$ when $r>a_0.$ It is nondegenerate since $(\phi^{\mu})^\vee$ form an orthogonal basis, where $\mu_j$ ranges over the sets specified in the previous sentence. It also satisfies
\begin{align}
	\langle fg,h\rangle_0 = \langle f,gh\rangle_0
\end{align}
for any $f,g,h\in\mathcal{F}_\ell$ due to the fact that the Fourier transform is a homomorphism. Now, recall that $D\chi=S_{\mathcal{A}}$ where $D=(\delta_{i,j}A_{\overline{\rho}}(\iota_3(\overline{\mu}_j)))_{1\leq i,j\leq p},$ and by prop (\ref{prop-S-symmetric-unitary}), $S_{\mathcal{A}}$ is unitary, we may also define the form 
\begin{align}
	\langle f,g\rangle_1=\sum_{\mu} \widehat{f}(\iota_1(\overline{\mu}))\widehat{g}(\iota_1(\overline{\mu}))^*\widehat{A}_{\overline{\rho}}(\iota_1(\overline{\mu}))^2
\end{align}
where ${}^*$ denotes complex conjugation and $\mu$ ranges over $P_{\ell+h^\vee}^{++}$ when $r\leq a_0$ and $P_{\ell+h^\vee}^{\vee++}$ when $r>a_0.$ Under the form $\langle,\rangle_1,$ the elements $\{\overset{\circ}{\chi}_{\overline{\lambda}}\}_{\lambda\in P_\ell^{+}}$ form an orthonormal basis, see \cite{Bea} and \cite{Ho2} for a similar construction. Also note that
\begin{equation}
	\langle \overset{\circ}{\chi}_{\overline{\lambda}}\overset{\circ}{\chi}_{\overline{\mu}},\overset{\circ}{\chi}_{\overline{\nu}}\rangle_1=\langle \overset{\circ}{\chi}_{\overline{\lambda}},\overset{\circ}{\chi}_{-w_0(\overline{\mu})}\overset{\circ}{\chi}_{\overline{\nu}}\rangle_1
\end{equation}
where $w_0$ is the longest element in $\overset{\circ}{W}.$ The above forms immediately imply the semisimplicity of $\mathcal{F}_\ell$ and also give a Frobenius algebra structure. 

\medskip

\subsubsection{$A_1^{(1)}$ Example}

As an example, let us consider $\mathfrak{g}$ to be of type $A_1^{(1)}$ and level $\ell.$ The relevant characters $\overset{\circ}{\chi}_n$ are indexed by natural numbers $0\leq n\leq \ell$ and can be written
\begin{equation}
\overset{\circ}{\chi}_n=e^{-n}+e^{-n+2}+\cdots +e^{n-2}+e^{n}.
\end{equation}
Let $\psi^n=\psi(n\overline{\Lambda}_1).$ Let us denote the elements of $G_\ell$ by $0,1,\ldots,2\ell+3,$ where $k$ corresponds to $k\overline{\Lambda}_1,$ then we have
\begin{align}
\psi^n&=\sum_{k=0}^{2\ell+3}\psi^{n}(e^{k})e^{k}=\sum_{k=0}^{2\ell+3}e^{\frac{2\pi i}{\ell+h^\vee}\langle k,n+1\rangle}e^{k}\\
&=\sum_{k=0}^{2\ell+3}e^{\frac{\pi i}{\ell+2}k(n+1)}e^{k}\\
\end{align}
Now, suppose that $\ell=1.$ The fusion ring multiplication table in this case is given below.
\begin{center}
	\begin{tabular}{|c ||c|c|} 
		\hline
		 & $\overset{\circ}{\chi}_0$ & $\overset{\circ}{\chi}_1$  \\ 
		\hline\hline
		$\overset{\circ}{\chi}_0$ & $\overset{\circ}{\chi}_0$ &$\overset{\circ}{\chi}_1$ \\ 
		\hline
		$\overset{\circ}{\chi}_1$ & $\overset{\circ}{\chi}_1$ & $\overset{\circ}{\chi}_0$\\
		\hline 
	\end{tabular}
\end{center}
In $\mathcal{P}_\ell,$ we have
\begin{align}
\psi^0&=1+e^{\frac{\pi i}{3}}e^{1}+e^{\frac{2\pi i}{3}}e^{2}+e^{\frac{3\pi i}{3}}e^{3}+e^{\frac{4\pi i}{3}}e^{4}+e^{\frac{5\pi i}{3}}e^{5}\\
\psi^1&=1+e^{\frac{2\pi i}{3}}e^{1}+e^{\frac{4\pi i}{3}}e^{2}+e^{\frac{6\pi i}{3}}e^{3}+e^{\frac{8\pi i}{3}}e^{4}+e^{\frac{10\pi i}{3}}e^{5}\\
\psi^2&=1+e^{\frac{3\pi i}{3}}e^{1}+e^{\frac{6\pi i}{3}}e^{2}+e^{\frac{9\pi i}{3}}e^{3}+e^{\frac{12\pi i}{3}}e^{4}+e^{\frac{15\pi i}{3}}e^{5}\\
\psi^3&=1+e^{\frac{4\pi i}{3}}e^{1}+e^{\frac{8\pi i}{3}}e^{2}+e^{\frac{12\pi i}{3}}e^{3}+e^{\frac{16\pi i}{3}}e^{4}+e^{\frac{20\pi i}{3}}e^{5}\\
\psi^4&=1+e^{\frac{5\pi i}{3}}e^{1}+e^{\frac{10\pi i}{3}}e^{2}+e^{\frac{15\pi i}{3}}e^{3}+e^{\frac{20\pi i}{3}}e^{4}+e^{\frac{25\pi i}{3}}e^{5}\\
\psi^5&=1+e^{\frac{6\pi i}{3}}e^{1}+e^{\frac{12\pi i}{3}}e^{2}+e^{\frac{18\pi i}{3}}e^{3}+e^{\frac{24\pi i}{3}}e^{4}+e^{\frac{30\pi i}{3}}e^{5}\\
\end{align}
Also, we see that
\begin{align}
\phi^0&=\frac{1}{\sqrt{6}}(\widehat{\psi}^0+\widehat{\psi}^{4}),\ \phi^1=\frac{1}{\sqrt{6}}(\widehat{\psi}^{1}+\widehat{\psi}^{3}),\ \phi^2=\frac{1}{\sqrt{6}}\widehat{\psi}^{2},\text{ and } \phi^5=\frac{1}{\sqrt{6}}\widehat{\psi}^5,
\end{align}
and so $\phi^0,\phi^1$ correspond to regular dominant weights, while $\phi^2,\phi^5$ correspond to the non-regular dominant weights $(\ell+h^\vee)\theta=3\overline{\Lambda}_1$ and $0,$ respectively. Hence $\widehat{\Delta}_1=\phi^0+\phi^1.$ Now, consider that
\begin{align}
	\overset{\circ}{\chi}_0&=e^{0}\\
	\widehat{\overset{\circ}{\chi}}_0&=\phi^0+\phi^1+\phi^2+\phi^5\\
	\overset{\circ}{\chi}_1&=e^{-1}+e^{1}\\
	\widehat{\overset{\circ}{\chi}}_1&=(e^{-\frac{\pi i}{3}}+e^{\frac{\pi i}{3}})\phi^0+(e^{-\frac{2\pi i}{3}}+e^{\frac{2\pi i}{3}})\phi^1-2\phi^2+2\phi^5\\
	&=\phi^0-\phi^1-2\phi^2+2\phi^5.
\end{align}
The Fourier transform of the products are
\begin{align}
	\widehat{\overset{\circ}{\chi}_0\overset{\circ}{\chi}_j}&=\widehat{\overset{\circ}{\chi}_j}\\
	\widehat{\overset{\circ}{\chi}_1\overset{\circ}{\chi}_1}&=\phi^0+\phi^1+4\phi^2+4\phi^5\\
	&=\widehat{\overset{\circ}{\chi}_0} +3\phi^2+3\phi^5
\end{align}
which implies that $\Delta_1\overset{\circ}{\chi}_0\cdot \Delta_1\overset{\circ}{\chi}_j=\Delta_1\overset{\circ}{\chi}_j$ and $(\Delta_1\overset{\circ}{\chi}_1)^2=\Delta_1\overset{\circ}{\chi}_0,$ matching the multiplication table of the fusion product.


\begin{thebibliography}{9}
	
	
	\bibitem[Bea]{Bea}
	A. Beauville. \emph{Conformal Blocks, Fusion Rules, and the Verlinde Formula}, Proceedings of the Hirzebruch 65 Conference on Algebraic Geometry,  (Ramat Gan), 75-96, 1996.
	
	
	\bibitem[Bey]{Bey}
	{F. R. Beyl},
	\newblock {\it The Schur multiplicator of SL(2, Z/mZ) and the congruence subgroup property},
	\newblock {Mathematische Zeitschrift} 191, pp. 23–42, 1986.
	
	
	
	\bibitem[Che]{Che}
	{I. Cherednik},
	\newblock {\it Double Affine Hecke Algebras},
	\newblock { London Math. Soc. Lecture Note Ser.} 319, Cambridge Univ. Press, Cambridge, 2005.
	
	
	
	\bibitem[Fal]{Fal}
	G. Faltings. \emph{A Proof for the Verlinde Formula}, J. Alg. Geom., 3, 347-374, 1994.
	
	
	
	
	\bibitem[Fuc]{Fuc}
	J. Fuchs. \emph{Fusion Rules in Conformal Field Theory}. Fortschr. Phys.42, pp. 1–48, 1994.
	
	
	
	\bibitem[GH]{GH}
	{S. Gurevich and R. Hadani},
	\newblock {\it The Geometric Weil Representation},
	\newblock {Selecta Mathematica}, Vol. 13, N. 3., pp. 465-481, 2007.	
	
	\bibitem[GHH]{GHH}
	{S. Gurevich, R. Hadani, and R. Howe},
	\newblock {\it Quadratic Reciprocity and the Sign of the Gauss Sum via the Finite Weil Representation},
	\newblock {International Mathematics Research Notices}, Vol. 2010, No. 19, pp. 3729–3745, 2010.
	
	\bibitem[Ho1]{Ho1}
	J. Hong. \emph{Conformal Blocks, Verlinde Formula, and Diagram Automorphisms}, Advances in Math., Vol. 354, 2019.
	
	\bibitem[Ho2]{Ho2}
	J. Hong. \emph{Fusion Rings Revisited}, Contemporary Mathematics,  Volume 713, 135-147, 2018.
	
	\bibitem[Hu1]{Hu1}
	Y.Z. Huang. \emph{Vertex operator algebras and the Verlinde conjecture},  Comm. Cont. Math.,  vol. 10, no. 01, pp. 103–154, 2008.
	
	\bibitem[Hu2]{Hu2}
	Y.Z. Huang. \emph{Vertex operator algebras, the Verlinde conjecture and Modular Tensor Categories},  Proc. Natl. Acad. Sci. USA 102, 5352–5356, 2005.
	
	\bibitem[Hum]{Hum}
	{J. Humphreys},
	\newblock {\it Reflection Groups and Coxeter Groups},
	\newblock {Cambridge Studies in Advanced Mathematics} 29, Cambridge Univ. Press, Cambridge, 1992.
	
	\bibitem[IS]{IS}
	B. Ion and S. Sahi. \emph{Double Affine Hecke Algebras and Congruence Groups}, Memoirs of the AMS, to appear.
	
	
	\bibitem[Ka]{Ka}
	V. Kac. \emph{Infinite Dimensional Lie Algebras}, 3rd ed., Cambridge, Cambridge University Press, 1990.
	
	\bibitem[KP]{KP}
	V. Kac and D. Peterson. \emph{Infinite-Dimensional Lie Algebras, Theta Functions, and Modular Forms}, Advances in Math. 53, 125-264, 1984.
	
	
	
	
	\bibitem[Ku1]{Ku1}
	S. Kumar. \emph{Fusion product of positive level representations and Lie algebra homology}, Geometry and Physics, (ed. by J.E. Andersen et. al.). Lecture Notes in Pure and Applied Mathematics vol. 184, Marcel Dekker, Inc., pp. 253-259, 1997.
	
	\bibitem[Ku2]{Ku2}
	S. Kumar. \emph{Conformal Blocks, Generalized Theta Functions and Verlinde Formula}, Unpublished manuscript.
	
	
	\bibitem[LV]{LV} 
	G. Lion and M. Vergne. \emph{The Weil representation, Maslov index and Theta series}. Progress in Mathematics, 6., Birkh\"{a}user, Boston, Mass., 1980.
	
	
	\bibitem[Sha]{Sha}
	{D. Shale},
	\newblock {\it Linear symmetries of free boson fields},
	\newblock {Trans. Amer. Math. Soc. }, 103, pp. 149-167, 1962.
	
	\bibitem[Tel]{Tel} 
	C. Teleman. \emph{Lie algebra cohomology and the fusion rules}. Comm. Math. Phys. 173, 265–311, 1995.

	\bibitem[Ter]{Ter} 
	A. Terras, \emph{Fourier Analysis on Finite Groups and Applications}, Cambridge University Press, 1999.

	
	\bibitem[Ver]{Ver}
	E. Verlinde. \emph{Fusion Rules and Modular Transformations in 2D Conformal Field Theory}. Nuclear Physics, B300, 360-376, 1988.
	
	\bibitem[Wal]{Wal}
	M. Walton. \emph{Algorithm for WZW Fusion Rules: A Proof}. Physics Letters B, Vol. 241, No. 3, 365-368, 1990.
	
	\bibitem[Wei]{Wei}
	A. Weil. \emph{Sur certains groupes d'opérateurs unitaires}. Acta Math. Vol. 111, pp. 143-211, 1964.
	
	
	
	
	
\end{thebibliography}
\end{document}